\def\rightharpoonupfill@{%
  \arrowfill@\relbar\relbar\rightharpoonup}
\def\leftharpoondownfill@{%
  \arrowfill@\leftharpoondown\relbar\relbar}
\newcommand{\xrightleftharpoons}[2][]{\mathrel{%
\raise.22ex\hbox{%
$\ext@arrow 3095\rightharpoonupfill@{\phantom{#1}}{#2}$}%
\setbox0=\hbox{%
$\ext@arrow 0359\leftharpoondownfill@{#1}{\phantom{#2}}$}%
\kern-\wd0 \lower.22ex\box0}%
}
\def\rightharrowfill@{%
  \arrowfill@\relbar\relbar\rightarrow}
\def\leftharrowfill@{%
  \arrowfill@\leftarrow\relbar\relbar}
\newcommand{\xrightleftharrow}[2][]{\mathrel{%
\raise.22ex\hbox{%
$\ext@arrow 3095\rightharrowfill@{\phantom{#1}}{#2}$}%
\setbox0=\hbox{%
$\ext@arrow 0359\leftharrowfill@{#1}{\phantom{#2}}$}%
\kern-\wd0 \lower.22ex\box0}%
}
\newtheorem{theorem}{Theorem}[section]
\newtheorem{lemma}[theorem]{Lemma}
\newtheorem{definition}[theorem]{Definition}
\newtheorem{remark}[theorem]{Remark}
\title{Dynamical aspects of the total QSSA in Enzyme Kinetics.}
\author{Alberto Maria Bersani$^1$  \and
        Enrico Bersani$^2$ \and
        Alessandro Borri$^3$ \and
        Pierluigi Vellucci$^4$ \\ pierluigi.vellucci@sbai.uniroma1.it %etc.
}
\begin{document}
\maketitle

\begin{abstract}
In this paper we prove that the well-known quasi-steady state approximations, commonly used in enzyme kinetics, which can be interpreted as the reduced system of a differential system depending on a perturbative parameter, according to Tihonov theory, are asymptotically equivalent to the center manifold of the system. This allows to give a mathematical foundation for the application of a mechanistic method to determine the center manifold of (at this moment, still simple) enzyme reactions.

%Include keywords, PACS and mathematical subject classification numbers as needed.
% \PACS{PACS code1 \and PACS code2 \and more}
%\subclass{34C20 \and 34C45 \and 34D15 \and 34D20 \and 34E05 \and 34E10 \and 37D10 \and 92B05 \and 92C45}
\end{abstract}

\section{Introduction}

The{\let\thefootnote\relax\footnotetext{Keywords: Michaelis-Menten kinetics, Center manifold, Tihonov's Theorem, quasi-steady state approximation, singular perturbation.}} mathematical{\let\thefootnote\relax\footnotetext{$^1$ Dipartimento di Ingegneria Meccanica e Aerospaziale, Via Eudossiana n. 18, 00184 Roma.}}
{\let\thefootnote\relax\footnotetext{$^2$ Laboratorio di Strutture e Materiali Intelligenti - Sapienza University Palazzo Caetani, via San Pasquale snc, 04012 - Cisterna di Latina (LT) Italy.}}
{\let\thefootnote\relax\footnotetext{$^3$ Istituto di Analisi dei Sistemi ed Informatica ``Antonio Ruberti'' (IASI-CNR) Piazzale A. Moro, 7 00185 Rome - Italy.}}
{\let\thefootnote\relax\footnotetext{$^4$Dept. of Economics, University of Roma TRE, via Silvio D'Amico 77, 00145 Rome, Italy.}}
 study of enzyme kinetics (or Michaelis-Menten kinetics) is mainly based on the standard Quasi-Steady State Approximation (sQSSA) - which has been used in Biochemistry, since the pioneering papers by Bodenstein \cite{bodenstein} and Chapman and Underhill \cite{chapman} - which starts from the observation that enzyme reactions are characterized by a first, short transient phase, where the intermediate complex rapidly grows, and a second, longer quasi-equilibrium phase, where the complex slowly decays in the product, in general the activated substrate. Each phase of the reaction has a time scale ($t_c$ and $t_s$, respectively).

The quasi-steady state approximation is a very efficient way of simplification for the description of a typical saturation phenomenon, occurring in enzyme kinetics, but present in several other biological systems. Let us cite, just as non exhaustive examples, the Monod-Wyman-Changeux molecular model of cooperativity in allosteric reactions \cite{monod}, or, more recently, the model of Lekszycki and coworkers concerning the bone regeneration \cite{Lu,Giorgio} and the model of infarcted cardiac tissue regeneration by means of stem cells \cite{stem}, where saturation phenomena are observed.

The reaction can be described as follows.

Let us consider an enzyme, $E$, which reacts with a protein, $X$, resulting in an intermediate complex, $C$. In turn, this complex can break down into a product, $X_p$, and the enzyme $E$. It is frequently assumed that the formation of $C$ is reversible while its breakup is not. The process is represented by the following sequence of reactions
\begin{equation}
\label{eq:a1}
X+E  \xrightleftharpoons[k_{-1}]{k_{1}}  C \xrightarrow{k_2} X_p+E
\end{equation}
where $k_1, k_{-1}, k_2$ are the reaction rates.

For notational convenience we will use variable names to denote both a chemical species and its concentration. For example, $E$ denotes both an enzyme and its concentration. Reaction (\ref{eq:a1}) obeys two natural constrains: the total amounts of protein and enzyme remain constant. Therefore,
\begin{equation}
\label{eq:a2}
X+C+X_p=X_T \ \ \text{and} \ \ E+C=E_T,
\end{equation}
for positive constants $X_T$ and $E_T$. In conjunction with the constraints (\ref{eq:a2}), the following Cauchy Problem for a system of two ordinary differential equations can be used to model reaction (\ref{eq:a1}):
\begin{align}
\label{eq:a3}
&\dot X(t)=-k_1 X(t) \left(E_T-C(t) \right)+k_{-1}C(t)\notag \\
 \dot C(t) & =k_1 X(t) \left(E_T-C(t) \right)-\left(k_{-1}+k_2\right)C(t) \notag \\
& = k_1 \left[ X(t) \left(E_T-C(t) \right)- K_M C(t) \right] \notag \\
& X(0)=X_T,   \quad C(0)=0.
\end{align}
where $\dot{x} (t) =\frac{d x}{dt}$ and where $E_T$, $S_T$, $k_1$, $k_2$, $k_{-1}$ are viewed as fixed positive constants and $K_M=(k_{-1}+k_2)/k_1$ is the Michaelis affinity constant. Similarly, we can define the dissociation constant $K_D =k_{-1}/k_1$ and the Van Slyke-Cullen constant $K=k_2/k_1$.

Since, after a short transient, where the complex $C$ rapidly grows, reaching its maximal concentration, it slowly decays, the sQSSA consists in supposing that, after the transient phase, the complex can be considered in a quasi-equilibrium, i.e., posing $\displaystyle \frac{dC}{dt} \cong 0$. With this approximation, the system becomes the  differential-algebraic system

\begin{align}
\label{eq:a3bis}
& C(t) = \frac{E_T X(t)}{X(t) + K_M},  \notag \\
&\dot X(t)=-k_2 C(t) = - \frac{V_{max} X(t)}{X(t) + K_M} ,\ \ \ \ \ \ \ \ \ \ X(0)=X_T ,
\end{align}
($V_{max} = k_2 E_T$) where only the initial condition $X(0)= X_T$ can be imposed, because the sQSSA describes only the slow phase, where the initial value of $C(t)$ is its maximal value, instead of $0$.

In the Sixties of the last Century mathematicians (see, in particular, \cite{Hein67}) interpreted the sQSSA in terms of leading order term of asymptotic expansions with respect to a perturbation parameter $\varepsilon$, which must be supposed small. Heineken et al. used $\varepsilon_{HTA} = E_T /S_T$, because in literature it is widely used to impose that the initial concentration of the enzyme $E$ is much less than the concentration of the substrate $X$.

The parameter can also arise by virtue of a biochemical condition imposing the separation between the two timescales $t_c$ and $t_s$ characterizing the reaction (see also \cite{lin,Segel1,Segel2,SS,palsson4}). In this way, Segel-Slemrod \cite{SS} showed that the sQSSA can be obtained also as the leading order of an asymptotic expansion in terms of $\varepsilon_{SS} = \frac{E_T}{S_T + K_M}$, enlarging the parameter range of validity of the sQSSA.

Inspired by the papers by Laidler \cite{laidler}, Swoboda \cite{swoboda,swoboda1}, and Schauer and Heinrich \cite{schauer}, Borghans et al. \cite{borghans} introduced a different approximation, called total Quasi-Steady State approximation (tQSSA) which uses the new variable $\overline{X} = X + C$, called total substrate.

Formally, introducing the "lumped" variable $\bar X:=X+C$, problem (\ref{eq:a3}) can be rewritten as
\begin{align}
\label{eq:a4}
&\dot{\bar{X}}(t)=-k_2 C(t), \notag \\
&\dot C(t)=k_1 \left[\bar{X}(t) E_T-\left(\bar X (t)+E_T+K_M\right)C(t)+C^2(t)\right], \notag \\
& \overline{X}(0)=X_T, \quad C(0)=0.
\end{align}

Also the tQSSA posits that $C$ equilibrates quickly compared to $\bar X$.

Imposing also in this case a quasi-steady state approximation ($\displaystyle \frac{dC}{dt} \cong 0$), we obtain

\begin{equation}
  \label{eq:tqssa_single}
  \dot{\bar{X}} \approx - k \, C_{-} (\bar X), \quad \bar X(0)=X_T,
\end{equation}
where
\begin{equation}
  \label{eq:cminus_single}
  C_-(\bar X)=\frac{(E_T+K_M+\bar X)-\sqrt{(E_T+K_M+\bar X)^2-4E_T\bar X}}{2}
\end{equation}
is the only biologically allowed solution of $\displaystyle \frac{dC}{dt}=0$.

Let us remark that since, thanks to the conservations laws, $P(t) = X_T - \bar X (t)$, the tQSSA can be viewed as the other side of the coin of Laidler's theory, though the approach followed in \cite{schauer,borghans} implicitly contains much more information about the reliability of the approximation, as shown in \cite{Be14}.

Also the tQSSA can be seen as the leading order term of an asymptotic expansion  in terms of a suitable parameter,
 $\displaystyle \varepsilon =\frac{E_T}{(K_M+E_T+X_T)^2}$, producing a new approximation, which is valid in a much wider parameter range. The parameter, introduced in \cite{borghans}, appears already in a paper by Palsson \cite{palsson4}, where the author determines sufficient conditions for the application of any Quasi-Steady State Approximation, based again on the time scale separation.
Taking into account that the perturbation parameter is always less than $1/4$, its introduction in terms of time scale separation appears much more natural than the previous parameters.
This result gives a theoretical mathematical foundation of the choice of the parameter in the tQSSA.
Moreover, several authors (see, for example, \cite{Segel1,Segel2}) study the transient phase of the reaction supposing that in this phase $X$ does not change substantially. This hypothesis is not realistic, while, using the total substrate $\bar{X}$, we observe that at time 0, we have $\dot{\bar{X}}(0)=0$, which addresses much more naturally the request of small changes of the total substrate in the initial time of the reaction.

In Figure \ref{figura1} we show the different efficiency of the two quasi-steady state approximations, when the parameters are stressed in such a way that the sQSSA is no more valid.

\begin{figure}[htp]
\centering
\includegraphics[width=0.5\textwidth]{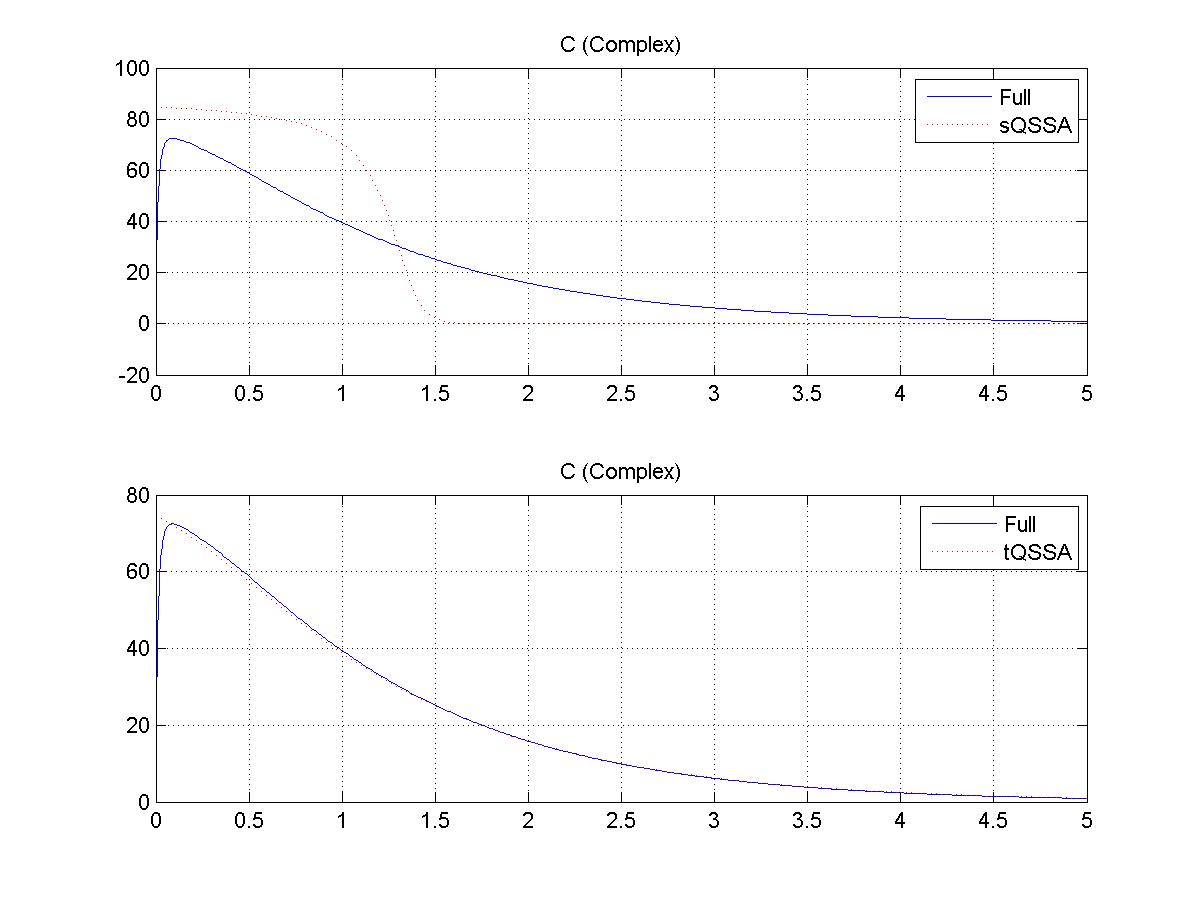}\hfil
\includegraphics[width=0.5\textwidth]{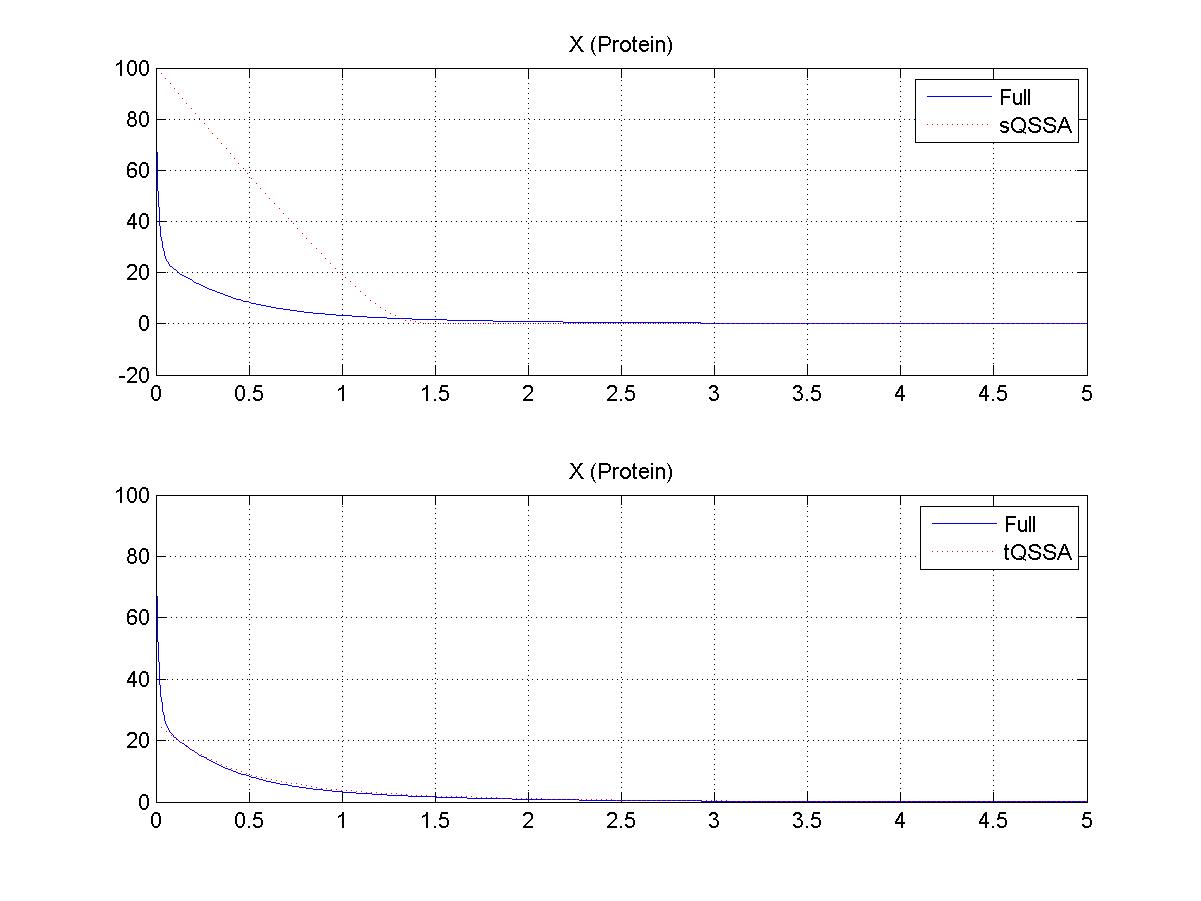}
\caption{Comparison of the complexes (left) and of the substrates (right), solution of the system (\ref{eq:a3}), with their sQSSA (\ref{eq:a3bis}) and tQSSA (\ref{eq:tqssa_single}). The parameter set is the following: $k_1=k_2=1; k_{-1}=4; E_T = 89; X_T = 100; K_M= 5; K= 4; \varepsilon_{SS} = 0.85; \varepsilon = 0.01$. The inadequacy of the sQSSA, mainly in the first part of the reaction, is evident, while the tQSSA in indistinguishable from the numerical solution of the system.}
\label{figura1}
\end{figure}

In previous literature the different QSSAs are approached by means of two different tools: Tihonov's Theorem \cite{Tik48,Tik50,Tik52}, which studies the asymptotic stability of systems of differential equations characterized by the presence of small perturbative parameters and Center Manifold Theory, which is one of the most powerful tools to study the dimensional reduction of differential systems. For example, on the one hand, Heineken et al. \cite{Hein67} and Dvo\v{r}\'{a}k and \v{S}i\v{s}ka \cite{dvorak} quote Tihonov's Theorem in order to justify the sQSSA, while Khho and Hegland \cite{Khoo} refer to this theorem to apply the tQSSA; on the other hand, other authors \cite{Nguyen,Ku11} interpret the sQSSA and the tQSSA, respectively, as the slow manifold of the Michaelis-Menten kinetics.

However, at the best of our knowledge, the two approaches are not yet compared, in order to check whether there exist any equivalences between the so-called singular points, introduced by Tihonov, and the center manifolds, as studied, for example, by Carr \cite{Carr81}.

Applying the techniques exposed in \cite{Wigg2,Wigg}, we show that the two approximations are asymptotically equivalent, concluding that the sQSSA and the tQSSA can be interpreted both as the singular point of the Michaelis-Menten kinetics and its center manifold.

This means that Tihonov's Theorem implies that any QSSA can be mathematically interpreted as the study of the reduced system of the original system setting the perturbative parameter $\varepsilon = 0$, instead of setting the derivative of the complex $C(t)$ equal to zero.

This fact formally allows the application of a "mechanistic" passage, consisting in equating to zero the derivatives of the complexes, in the single reaction scheme, as in more complex reactions, because this is the simplest way to reach (an approximate) expression of the center manifold. For example, Wang and Sontag \cite{wang2} apply this technique for the study of the sQSSA of the double phosphorylation-dephosphorylation cycle.

As shown in \cite{us_febs}, however, these approximation are no more applicable to mechanisms where oscillations can appear and an a priori analysis of the application should be performed every time we have to deal with any QSSA.

The paper is organized as follows in Section 2 we recall all the main definitions and properties of Tihonov's Theory and of the Center Manifold Theory; in Section 3 we show the equivalence of the two approaches in the case of the sQSSA, of the tQSSA and for a class of more general systems of differential equations characterized by the presence of a small, perturbative parameter; in Section 4 we discuss some future applications of this results to more complex enzymatic reactions.

\section{Preliminary results and notations on nonlinear dynamical systems.}

In this section we introduce the notations and summarize the results we need for the formulation of the problem investigated here. For convenience of the reader we closely follow the notations of the fundamental book by Wiggins \cite{Wigg}.

We will investigate systems in the class of general autonomous vector fields
\begin{equation}
\label{eq:1}
\dot x=f(x), \ \ \ \ x\in\mathbb R^n.
\end{equation}

It is natural to consider the linearized system
\begin{equation}
\label{eq:7}
\dot{y}=Ay, \ \ \ y\in\mathbb R^n,
\end{equation}
associated to the vector field (\ref{eq:1}), if $\bar x\in\mathbb R^n$ is one of its fixed points, and the constant Jacobian $n\times n$ matrix $A=Df(\bar x)$. Then $\mathbb R^n$ can be represented as the direct sum of three subspaces denoted $E^s$, $E^u$, and $E^c$, which are defined as follows:
\begin{align}
\label{eq:8}
&E^s=\operatorname{span}\{e_1,...,e_s\}, \notag \\
&E^u=\operatorname{span}\{e_{s+1},...,e_{s+u}\}, \ \ \ \ \ \ \  s+u+c=n \notag \\
&E^c=\operatorname{span}\{e_{s+u+1},...,e_{s+u+c}\},
\end{align}
where $\{e_1,...,e_s\}$, $\{e_{s+1},...,e_{s+u}\}$, $\{e_{s+u+1},...,e_{s+u+c}\}$ are the (generalized) eigenvectors of $A$ corresponding to the eigenvalues having negative real part, positive real part and zero real part, respectively. $E^s$, $E^u$, and $E^c$ are referred to as the stable, unstable, and center subspaces, respectively.
They are invariant subspaces (or manifolds) since solutions of (\ref{eq:7}) with initial conditions entirely contained in either $E^s$, $E^u$, or $E^c$ must forever remain in that particular subspace for all time.
%Moreover, solutions starting in $E^s$ approach $y = 0$ asymptotically as $t \rightarrow +\infty$ and solutions starting in $E^u$ approach $y = 0$ asymptotically as $t\rightarrow-\infty$.

It is well known that there exists a linear transformation $T$ which transforms the linear equation (\ref{eq:7}) into block diagonal form
\begin{equation}
\label{eq:14}
\left(\begin{array}{c}
          \dot{u} \\
          \dot{v} \\
          \dot w
        \end{array}
\right)=\left(
          \begin{array}{ccc}
            A_s & 0 & 0 \\
            0 & A_u & 0 \\
            0 & 0 & A_c \\
          \end{array}
        \right)
\left(\begin{array}{c}
          u \\
          v \\
          w
        \end{array}
\right),
\end{equation}
where $T^{-1}y\equiv(u,v,w)\in\mathbb R^s\times \mathbb R^u\times \mathbb R^c$, $s + u + c = n$, $A_s$ is an $s \times s$ matrix having eigenvalues with negative real part, $A_u$ is an $u \times u$ matrix having eigenvalues with positive real part, and $A_c$ is an $c\times c$ matrix having eigenvalues with zero real part. The $0$ in the block diagonal form (\ref{eq:14}) indicate appropriately sized block consisting of all zero's. Using this same linear transformation to transform the coordinates of the nonlinear vector field (\ref{eq:1}) gives the equation
\begin{equation}
\label{eq:13}
\begin{cases}
&\dot u=A_s u+R_s(u,v,w), \\
&\dot v=A_u v+R_u(u,v,w), \\
&\dot w=A_c w+R_c(u,v,w),
\end{cases}
\end{equation}
where $R_s(u, v,w)$, $R_u(u, v,w)$, and $R_c(u, v,w)$ are the first $s$, $u$, and $c$ components, respectively, of the vector $T^{-1}R Ty$.

The following theorem shows how this structure changes when the nonlinear vector field (\ref{eq:13}) is considered. It is stated without proof (see \cite{Wigg2} for details).

\begin{theorem}[Local Stable, Unstable, and Center Manifolds of Fixed Points]
\label{th:2}
Suppose (\ref{eq:13}) is $C^r$, $r \geq 2$. Then the fixed point $(u, v,w) = 0$ of (\ref{eq:13}) possesses a $C^r$ s-dimensional local, invariant stable manifold, $W^s_{loc}(0)$, a $C^r$ u-dimensional local, invariant unstable manifold, $W^u_{loc}(0)$, and a $C^r$ c-dimensional local, invariant center manifold, $W^c_{loc}(0)$, all intersecting in $(u, v,w) = 0$. These manifolds are all tangent to the respective invariant subspaces of the linear vector field (\ref{eq:14}) at the origin and, hence, are locally representable as graphs. In particular, we have
$$W^s_{loc}(0)=\Bigl\{(u,v,w)\in\mathbb R^s\times\mathbb R^u\times\mathbb R^c|v=h^s_v(u), w=h^s_w(u);$$
$$Dh^s_v(0)=Dh^s_w(0)=0;\ |u| \ \text{sufficiently small} \Bigr\}$$

$$W^u_{loc}(0)=\Bigl\{(u,v,w)\in\mathbb R^s\times\mathbb R^u\times\mathbb R^c|u=h^u_u(v), w=h^u_w(v);$$
$$Dh^u_u(0)=Dh^u_w(0)=0;\ |v| \ \text{sufficiently small} \Bigr\}$$

$$W^c_{loc}(0)=\Bigl\{(u,v,w)\in\mathbb R^s\times\mathbb R^u\times\mathbb R^c|u=h^c_u(w), v=h^c_v(w);$$
$$Dh^c_u(0)=Dh^c_v(0)=0;\ |w| \ \text{sufficiently small} \Bigr\}$$
where $h^s_v(u)$, $h^s_w(u)$, $h^u_u(v)$, $h^u_w(v)$, $h^c_u(w)$, and $h^c_v(w)$ are $C^r$ functions. Moreover, trajectories in $W^s_{loc}(0)$ and $W^u_{loc}(0)$ have the same asymptotic properties as trajectories in $E^s$ and $E^u$, respectively. Namely, trajectories of (\ref{eq:13}) with initial conditions in $W^s_{loc}(0)$ (resp., $W^u_{loc}(0)$) approach the origin at an exponential rate asymptotically as $t \rightarrow +\infty$ (resp., $t \rightarrow -\infty$).
\end{theorem}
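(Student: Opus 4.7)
The plan is to use the Lyapunov--Perron integral equation method, which handles all three manifolds in a unified way, with the block-diagonal form (\ref{eq:13}) providing the spectral splitting I need. First I would reduce the problem: the unstable manifold for $\dot x = f(x)$ is exactly the stable manifold for the time-reversed system $\dot x = -f(x)$, so it suffices to construct $W^s_{loc}(0)$ and $W^c_{loc}(0)$. The hyperbolic estimates $\|e^{A_s t}\| \le K e^{-\sigma t}$ for $t \ge 0$ and $\|e^{A_u t}\| \le K e^{\sigma t}$ for $t \le 0$, together with polynomial growth of $e^{A_c t}$, are the only linear facts I will need.

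For the stable manifold, I would rewrite (\ref{eq:13}) using variation of constants and look for trajectories decaying at some exponential rate $\alpha \in (0,\sigma)$ as $t \to +\infty$. Such a trajectory must satisfy
\[
u(t) = e^{A_s t} u_0 + \int_0^t e^{A_s(t-\tau)} R_s\, d\tau,
\]
while the components that would otherwise blow up are forced to vanish at infinity via
\[
v(t) = -\int_t^{+\infty} e^{A_u(t-\tau)} R_u\, d\tau, \qquad w(t) = -\int_t^{+\infty} e^{A_c(t-\tau)} R_c\, d\tau.
\]
Casting this as a fixed point $\Phi:X\to X$ on the Banach space of continuous curves on $[0,\infty)$ with weighted sup norm $\sup_{t\ge 0} e^{\alpha t}|(u,v,w)(t)|$, the hypotheses $R(0)=0$, $DR(0)=0$ give arbitrarily small Lipschitz constants near $0$, so $\Phi$ contracts whenever $|u_0|$ is small. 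Evaluating the unique fixed point at $t=0$ defines $h^s_v(u_0)$, $h^s_w(u_0)$, and $W^s_{loc}(0)$ emerges as a graph over $E^s$ by construction.

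For the center manifold, the same splitting applies but the center spectrum sits on the imaginary axis, so no a priori decay is available and the naive contraction fails. The standard remedy is to multiply each $R_i$ by a smooth cutoff $\chi(|(u,v,w)|/\delta)$ supported in a ball of radius $\delta$ around the origin; the modified nonlinearities $\widetilde R$ have globally Lipschitz constant of order $\delta$ and agree with $R$ near $0$. For this doctored system I would set up a Lyapunov--Perron equation on all of $\mathbb R$, parameterized by $w(0)=w_0$, seeking trajectories of subexponential growth in a weighted space $\sup_{t\in\mathbb R} e^{-\eta|t|}|(u,v,w)(t)|$ with $\eta$ smaller than the hyperbolic gap. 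Choosing $\delta$ small makes the map a contraction; the fixed point at $t=0$ yields $h^c_u(w_0)$ and $h^c_v(w_0)$, and since the cutoff is the identity near $0$ the resulting graph is locally invariant under the original flow.

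The tangency conditions $Dh(0)=0$ are immediate from differentiating the fixed-point formulas at the origin and using $DR(0)=0$, which kills every nonlinear integral. The asymptotic claims for $W^s_{loc}$ and $W^u_{loc}$ are built into the construction, since the manifold points are by definition those whose forward (resp.\ backward) trajectories lie in the exponentially weighted Banach space. The main obstacle, and the technically heaviest step, is upgrading the Lipschitz manifold produced by a single contraction to a $C^r$ manifold. The cleanest route is the fiber contraction theorem: one differentiates the Lyapunov--Perron equation $k$ times, interprets the resulting linear integral equations for $D^k h$ as fixed-point problems whose contraction constants are inherited from the base map, and then verifies continuity of each fixed point in its parameter. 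This works only if the contraction constants remain strictly below $1$ after $r$ successive differentiations, which forces a careful choice of the weight $\eta$ and, in the center case, the cutoff radius $\delta$ small enough to absorb the growth factor that each derivative introduces.
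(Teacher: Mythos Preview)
Your outline is a faithful sketch of the Lyapunov--Perron (integral equation) approach, which is one of the two standard proofs of this result; the reduction via time reversal, the variation-of-constants splitting, the cutoff trick for the center component, and the fiber contraction argument for $C^r$ regularity are all correctly identified and in the right order. The only places where a reader would press you are (i) in the center case you integrate the $w$-component through the origin of time rather than from $\pm\infty$, since $e^{A_c t}$ has only polynomial growth and neither half-line integral converges, and (ii) the dependence of the cutoff radius $\delta$ on $r$ means that the center manifold you build is $C^r$ for each fixed $r$ but not uniformly so---worth stating explicitly, since it is the reason $C^\infty$ center manifolds can fail to exist.

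As for comparison with the paper: there is nothing to compare. The paper states this theorem without proof, citing Wiggins' monograph \cite{Wigg2} for the argument, so your proposal does not conflict with anything in the text. If anything, you have supplied more than the paper does; the authors treat Theorem~\ref{th:2} purely as background and move directly to the approximation theorems \ref{th:3}--\ref{th:5}, which are what they actually use.
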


If the eigenvalues of the center subspace are all precisely zero - rather than having just real part zero - then a center manifold is called a \emph{slow manifold}.

\subsection{Center Manifolds} If $E^u=\emptyset$, then any orbit will rapidly decay to $E^c$. Thus, in order to investigate the long-time behavior (i.e., stability) we need only to investigate the system restricted to $E^c$. This simple reasoning is the foundation of the ``reduction principle'' applied to the study of the stability of nonhyperbolic fixed points of nonlinear vector fields.

For our purposes, let us consider vector fields of the following form
\begin{align}
\label{eq:34}
& \dot x= Ax+f(x,y),\notag \\
&\dot y=By+g(x,y), \ \ (x,y)\in\mathbb R^c\times \mathbb R^s,
\end{align}
where
\begin{align}
&f(0,0)=0, \ \ Df(0,0)=0, \notag \\
&g(0,0)=0, \ \ Dg(0,0)=0.
\end{align}
In the above, $A$ is a $c\times c$ matrix having eigenvalues with zero real parts, $B$ is an $s\times s$ matrix having eigenvalues with negative real parts, and $f$ and $g$ are $C^r$ functions ($r\geq 2$).

For the sake of notation simplicity, let us write
the center manifold in the following way:
\begin{equation}
\label{eq:center}
W^c(0)=\{(x,y)\in\mathbb R^c\times\mathbb R^s|y=h(x),|x|<\delta, h(0)=0,Dh(0)=0\}
\end{equation}
with $\delta$ sufficiently small.

\begin{remark}
\label{r:1}
We remark that the conditions $h(0) = 0$ and $Dh(0) = 0$ imply that $W^c(0)$ is tangent to $E^c$ at $(x, y) = (0, 0)$.
\end{remark}

The following three theorems are taken from the seminal book \cite{Carr81}, as reported in \cite{Wigg}.

\begin{theorem}[Existence]
\label{th:3}
There exists a $C^r$ center manifold for (\ref{eq:34}). The dynamics of (\ref{eq:34}) restricted to the center manifold is, for $u$ sufficiently small, given by the following c-dimensional vector field
\begin{equation}
\label{eq:35}
\dot u=Au+f(u,h(u)), \ \ u\in\mathbb R^c.
\end{equation}
\end{theorem}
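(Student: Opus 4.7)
The plan is to realize the center manifold as the graph $y = h(x)$ of a fixed point of an integral (Duhamel) operator, after first applying a cutoff to obtain a global problem with small Lipschitz data. Since $Df(0,0) = Dg(0,0) = 0$, the nonlinearities are ``small'' only locally; to exploit this smallness on all of $\mathbb R^c$, I would multiply $f$ and $g$ by a smooth bump function $\psi(x/\delta)$ with $\psi\equiv1$ for $|x|\le1$ and $\psi\equiv0$ for $|x|\ge2$, obtaining $\tilde f,\tilde g$ which coincide with $f,g$ on $\{|x|<\delta\}$ and whose global Lipschitz constants can be made as small as desired by shrinking $\delta$. Any global center manifold of the cutoff system will, when restricted to $\{|x|<\delta\}$, furnish the local center manifold claimed in the theorem.

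Next I would rewrite the invariance condition $Dh(x)[Ax+\tilde f(x,h(x))]=Bh(x)+\tilde g(x,h(x))$ as an integral equation. For a candidate graph $h$, let $\xi\mapsto x(\tau;\xi,h)$ denote the flow of $\dot x = Ax+\tilde f(x,h(x))$ with $x(0)=\xi$; along this flow, $y(\tau)=h(x(\tau;\xi,h))$ must satisfy $\dot y = By+\tilde g(x,y)$. Because $B$ has spectrum strictly in the left half-plane, $\|e^{B\tau}\|\le Ce^{-\alpha\tau}$ for $\tau\ge 0$, while the cutoff forces $h$ and $\tilde g$ to be bounded, so the unique solution that remains bounded on $(-\infty,0]$ is given by the variation-of-constants formula
\begin{equation*}
(\mathcal Th)(\xi)=\int_{-\infty}^{0}e^{-B\tau}\,\tilde g\bigl(x(\tau;\xi,h),h(x(\tau;\xi,h))\bigr)\,d\tau.
\end{equation*}
Fixed points of $\mathcal T$ are precisely the invariant graphs.

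I would then set $\mathcal X$ to be the Banach space of bounded Lipschitz maps $h:\mathbb R^c\to\mathbb R^s$ with $h(0)=0$ and $\mathrm{Lip}(h)\le L$, equipped with the sup norm, and verify that $\mathcal T:\mathcal X\to\mathcal X$ is a contraction. The two ingredients are the exponential weight $e^{-B\tau}$, which keeps the integral finite, and the smallness of $\mathrm{Lip}(\tilde f),\mathrm{Lip}(\tilde g)$, which (together with a Gronwall estimate controlling $x(\tau;\xi,h_1)-x(\tau;\xi,h_2)$ against $\|h_1-h_2\|_\infty$) makes the contraction constant strictly less than $1$ once $\delta$ is taken small enough. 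Banach's theorem produces the unique fixed point $h\in\mathcal X$; the tangency $Dh(0)=0$ follows by differentiating the fixed-point equation at $\xi=0$ and using $Df(0,0)=Dg(0,0)=0$. Finally, invariance of the graph says that solutions of \eqref{eq:34} with initial data $(\xi,h(\xi))$ stay on $\{y=h(x)\}$, so the $x$-component obeys exactly $\dot u = Au+f(u,h(u))$, yielding \eqref{eq:35}.

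The step I expect to be the main obstacle is upgrading the $C^0$/Lipschitz fixed point to a $C^r$ function, since the contraction above only controls $h$ in the sup norm. The standard cure is a fiber contraction theorem: one extends the operator $\mathcal T$ to act on pairs $(h,H)$ where $H$ is a candidate for $Dh$, shows the extended map is again a contraction in an appropriate norm while leaving the base map unchanged, and iterates this bootstrap $k=1,\dots,r$ times, each time picking up one more derivative by differentiating the fixed-point equation and regarding the resulting equation for $D^kh$ as a new fixed-point problem whose linear part is contractive because of the same spectral gap between $A$ and $B$. This regularity argument, rather than the existence statement itself, is the delicate part; it is carried out in detail in Carr \cite{Carr81}, whose proof I would follow.
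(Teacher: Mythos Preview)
Your sketch is correct and follows the standard Lyapunov--Perron/fiber-contraction route of Carr~\cite{Carr81}, which is exactly the source the paper defers to: the paper does not give its own proof of this theorem but merely quotes it from \cite{Carr81} as reported in \cite{Wigg}. There is nothing to compare beyond noting that your outline is the argument those references carry out.
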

The next result implies that the dynamics of (\ref{eq:35}) near $u = 0$ determines the dynamics of (\ref{eq:34}) near $(x, y) = (0, 0)$.
\begin{theorem}[Stability]
\label{th:4}
i) Let the zero solution of (\ref{eq:35}) be stable (asymptotically stable) (unstable); then the zero solution of (\ref{eq:34}) is also stable (asymptotically stable) (unstable).
ii) Let the zero solution of (\ref{eq:35}) be stable. Then if $(x(t),y(t))$ is a solution of (\ref{eq:34}) with $(x(0),y(0))$ sufficiently small, there is a solution $u(t)$ of (\ref{eq:35}) such that as $t\rightarrow\infty$
\begin{align}
&x(t) = u(t) + O\left(e^{-\gamma t}\right), \notag \\
&y(t) = h(u(t)) + O\left(e^{-\gamma t}\right),
\end{align}
where $\gamma>0$ is a constant.
\end{theorem}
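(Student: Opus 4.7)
The plan is to prove (ii) first, since from it both the stable and asymptotically stable cases of (i) follow at once; the unstable case of (i) requires a separate, more elementary invariance argument.

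For (ii), the key change of variables is $z(t):=y(t)-h(x(t))$, which measures how far the trajectory lies off the center manifold. Differentiating and using the invariance identity $Dh(x)\bigl[Ax+f(x,h(x))\bigr]=Bh(x)+g(x,h(x))$ (which characterises the center manifold supplied by Theorem \ref{th:3}), one obtains an ODE
\begin{equation*}
\dot z = Bz + N(x,z),
\end{equation*}
where $N$ and $DN$ vanish at $(0,0)$. Since the eigenvalues of $B$ have strictly negative real parts, there exist $K\geq 1$ and $\gamma_{0}>0$ with $\|e^{Bt}\|\leq K e^{-\gamma_{0}t}$. Combining this with the variation-of-constants formula and the smallness of $N$ on a neighbourhood of the origin, a standard Gronwall-type estimate should give $\|z(t)\|\leq C\|z(0)\|e^{-\gamma t}$ for some $\gamma\in(0,\gamma_{0})$, provided the initial data $(x(0),y(0))$ are sufficiently small. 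This already yields the second line of the asymptotics claimed in (ii).

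Next I would produce the shadowing solution $u(t)$. Using the just-obtained decay of $z$, the $x$-component of (\ref{eq:34}) reads $\dot x = Ax + f(x,h(x)) + R(t)$ with $R(t)=O(e^{-\gamma t})$, i.e.\ an exponentially small perturbation of the reduced equation (\ref{eq:35}). Setting $w:=x-u$ and writing the equation for $w$ via variation-of-constants, I would set up a contraction on the weighted Banach space of functions with finite norm $\|w\|_{\gamma}:=\sup_{t\geq 0}e^{\gamma t}\|w(t)\|$; the Lipschitz constant of $f$ near the origin is small because $Df(0,0)=0$, so the map is a contraction once the data are small, and its fixed point prescribes the correct $u(0)$ and the bound $x(t)-u(t)=O(e^{-\gamma t})$. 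Part (i) in the stable and asymptotically stable cases then follows immediately: given $\varepsilon>0$, choose the initial data so small that $u(t)$ stays within $\varepsilon/2$ of $0$ (using stability of (\ref{eq:35})) and that the exponential remainders are smaller than $\varepsilon/2$; asymptotic stability follows by the same estimate with ``bounded'' replaced by ``tending to $0$''. The unstable case does not pass through (ii); instead, by Theorem \ref{th:3} the curve $\bigl(u(t),h(u(t))\bigr)$ is a solution of (\ref{eq:34}) whenever $u(t)$ solves (\ref{eq:35}), so any orbit of the reduced equation leaving every neighbourhood of $0$ lifts directly to an orbit of the full system doing the same.

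The main obstacle is the shadowing step. Because the spectrum of $A$ lies on the imaginary axis, $\|e^{At}\|$ does not decay, and so the exponentially small forcing $R(t)$ alone does not automatically force $w\to 0$; one must choose exactly the right initial value $u(0)$, and this is precisely what the fixed-point problem on the weighted space is designed to do. The decay of $z$ established in the first step is essential here, since without it the effective forcing on the $x$-equation would not be integrable against the (non-decaying) semigroup of $A$, and the contraction argument would fail.
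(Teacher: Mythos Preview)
The paper does not supply its own proof of this theorem; it is quoted from Carr's monograph \cite{Carr81} as background for the center-manifold machinery used later. So the relevant comparison is with Carr's argument, and your outline follows it closely: the change of variable $z=y-h(x)$, the exponential decay of $z$ via variation-of-constants against the stable semigroup $e^{Bt}$, the shadowing construction of $u$ by a fixed-point argument in an exponentially weighted space (with $u(0)$ determined by the fixed point rather than prescribed), and the derivation of the stable and asymptotically stable cases of (i) from (ii) together with the separate invariance argument for the unstable case, are all the standard route.

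There is, however, one genuine gap. Your Gronwall estimate for $z$ requires the Lipschitz constant of $N$ to be small, which you only have on a neighbourhood of the origin; but at that point in the argument you have not established that $x(t)$ stays in such a neighbourhood for all $t\geq 0$. You cannot appeal to the assumed stability of the reduced equation, because $x$ is \emph{not} a solution of the reduced equation---it solves $\dot x = Ax + f(x,h(x)+z)$, and the perturbation $z$ is only known to be small after you have proved it decays. As written, the step is circular. The standard remedy (and the one Carr actually uses) is to first replace $f$ and $g$ by smooth cut-off modifications that agree with the originals on a small ball but have globally small Lipschitz constants; one proves (ii) for the modified system without any confinement hypothesis, and then checks a posteriori that for sufficiently small initial data the trajectory never leaves the ball, so it solves the original system too. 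You should either insert this localisation step explicitly or replace it with a careful bootstrap/continuation argument; without one or the other, the proof does not close.
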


It is possible to compute the center manifold so that we can reap the benefits of Theorem \ref{th:4}.
Using invariance of $W^c(0)$ under the dynamics of (\ref{eq:34}), we derive a quasi-linear partial differential equation that $h(x)$ must satisfy, in order for its graph to be a center manifold for (\ref{eq:34}). This is done as follows:
\begin{enumerate}
  \item The $(x, y)$ coordinates of any point on $W^c(0)$ must satisfy
  \begin{equation}
  \label{eq:36}
  y=h(x).
  \end{equation}
  \item Differentiating (\ref{eq:36}) with respect to time implies that the $(\dot x , \dot y)$ coordinates of any point on $W^c(0)$ must satisfy
  \begin{equation}
  \label{eq:37}
  \dot y=Dh(x) \dot x.
  \end{equation}
  \item Any point on $W^c(0)$ obeys the dynamics generated by (\ref{eq:34}). Therefore, substituting
  \begin{align}
  \label{eq:38}
  & \dot x= Ax+f(x,h(x)),\notag \\
  &\dot y=Bh(x)+g(x,h(x)), \ \ (x,y)\in\mathbb R^c\times \mathbb R^s,
  \end{align}
  into (\ref{eq:37}) gives
  \begin{equation}
  \label{eq:39}
  Bh(x)+g(x,h(x))=Dh(x) \left(Ax+f(x,h(x))\right).
  \end{equation}
  or
  \begin{equation}
  \label{eq:40}
  \mathcal N\left(h(x)\right)\equiv Dh(x) \left(Ax+f(x,h(x))\right)-Bh(x)-g(x,h(x))=0
  \end{equation}
\end{enumerate}
Then, to find a center manifold, all we need to do is to solve (\ref{eq:40}).
\begin{theorem}[Approximation, \cite{Wigg}]
\label{th:5}
Let $\phi:\mathbb R^c\rightarrow\mathbb R^s$ be a $C^1$ mapping with $\phi(0)=D\phi(0)=0$ such that $\mathcal N\left(\phi(x)\right)=O\left(|x|^q\right)$ as $x\rightarrow0$ for some $q>1$. Then
$$\left|h(x)-\phi(x)\right|=O\left(|x|^q\right), \ \ \text{as} \ x\rightarrow0.$$
\end{theorem}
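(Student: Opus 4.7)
The plan is to exploit the fact that the true center manifold $h$ satisfies the invariance relation $\mathcal N(h)\equiv 0$ identically and to compare this with the residual $\mathcal N(\phi)=O(|x|^q)$ by transporting the difference $\psi:=h-\phi$ along the reduced flow on the center and using the exponential stability of $B$ to invert the resulting linear problem. Subtracting the two invariance relations and applying the mean value theorem to $f$ and $g$, together with $Df(0,0)=Dg(0,0)=0$ and $D\phi(0)=0$, one arrives at a quasi-linear first-order equation
\begin{equation*}
D\psi(x)\bigl[Ax+f(x,h(x))\bigr]-B\,\psi(x)=E(x)\,\psi(x)-\mathcal N(\phi)(x),
\end{equation*}
where the matrix-valued coefficient $E(x)$ satisfies $|E(x)|\to 0$ as $|x|\to 0$.

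The PDE is then transported along characteristics. Let $u(t;x)$ denote the local flow of the reduced-on-$h$ system $\dot u=Au+f(u,h(u))$ with $u(0;x)=x$, and set $V(t):=\psi(u(t;x))$; differentiating along this flow converts the equation above into the linear ODE
\begin{equation*}
\dot V(t)=B\,V(t)+E(u(t))\,V(t)-\mathcal N(\phi)(u(t)).
\end{equation*}
Since every eigenvalue of $B$ has strictly negative real part, there exist $M,\beta>0$ with $|e^{Bt}|\le Me^{-\beta t}$ for $t\ge 0$, and dually $|e^{-Bs}|\le Me^{\beta s}$ for $s\le 0$. Invoking the cutoff construction of Carr used to establish Theorem \ref{th:3}, the backward flow $u(s;x)$ is defined for all $s\le 0$ and stays in the region $|u(s;x)|\le C|x|$ where the Taylor estimates on $f,g,h,\phi$ are valid and $V$ is uniformly bounded. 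Consequently the boundary term $e^{-B\tau}V(\tau)$ vanishes as $\tau\to-\infty$, and the variation-of-constants formula produces the integral representation
\begin{equation*}
V(0)=\int_{-\infty}^{0}e^{-Bs}\bigl[E(u(s))\,V(s)-\mathcal N(\phi)(u(s))\bigr]\,ds.
\end{equation*}

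From here the approximation bound follows by a Gronwall/bootstrap estimate. Using $|\mathcal N(\phi)(u(s))|\le K|u(s)|^q\le KC^q|x|^q$ and $|E(u(s))|\le\omega(|x|)$ with $\omega(|x|)\to 0$ in the integral representation gives
\begin{equation*}
|h(x)-\phi(x)|=|V(0)|\le\frac{M}{\beta}\Bigl(\omega(|x|)\sup_{s\le 0}|V(s)|+KC^q|x|^q\Bigr),
\end{equation*}
and a scaling argument applied to $\Psi(r):=\sup_{|x|\le r}|h(x)-\phi(x)|/|x|^q$ closes the bootstrap once the cutoff radius is chosen small enough that $M\omega(|x|)/\beta<1/2$. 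The main obstacle, in my view, is precisely this balancing: one must justify the backward-time flow estimate $|u(s;x)|\le C|x|$ and the uniform boundedness of $V(\tau)$ as $\tau\to-\infty$, both of which are ensured by the same cutoff machinery that underlies Theorem \ref{th:3}. Once those ingredients are in place, the exponential decay furnished by $|e^{-Bs}|\le Me^{\beta s}$ on $s\le 0$ does all the work, and the bound $|h(x)-\phi(x)|=O(|x|^q)$ emerges directly.
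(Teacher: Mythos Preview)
The paper does not supply its own proof of this theorem: Theorem~\ref{th:5} is quoted verbatim from Wiggins \cite{Wigg} (originating in Carr \cite{Carr81}) as part of the preliminary section, and is used throughout Section~3 as a black box. So there is no in-paper argument to compare against; the relevant benchmark is Carr's original proof.

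Your outline is the right skeleton and matches Carr's strategy: subtract $\mathcal N(h)=0$ from $\mathcal N(\phi)=O(|x|^q)$, follow the difference $\psi=h-\phi$ along the reduced center flow, and use the spectral gap of $B$ through variation of constants over $(-\infty,0]$. The place where your write-up overreaches is the backward-flow estimate. You assert that the cutoff construction gives $|u(s;x)|\le C|x|$ uniformly for all $s\le 0$. That is generally false: on the center direction $A$ has spectrum on the imaginary axis, and even after the cutoff makes the nonlinearity globally small-Lipschitz, backward trajectories may grow. What the cutoff \emph{does} give (and what Carr actually uses) is a slow-growth bound of the form $|u(s;x)|\le C|x|\,e^{\gamma|s|}$ for any prescribed small $\gamma>0$, obtained by shrinking the cutoff radius so that the Lipschitz constant of $f(\cdot,h(\cdot))$ is below $\gamma$. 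Feeding this into your integral, the forcing term contributes $|x|^q e^{q\gamma|s|}$ while $|e^{-Bs}|\le M e^{-\beta|s|}$, so the integral converges and yields the $O(|x|^q)$ bound provided one first fixes $\gamma<\beta/q$. The same exponential weight is what controls $\sup_{s\le 0}|V(s)|$ in your Gronwall step; without it, neither the vanishing of the boundary term $e^{-B\tau}V(\tau)$ nor the finiteness of $\sup_{s\le0}|V(s)|$ is justified.

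In short: replace the uniform bound on $u(s;x)$ by the slow-exponential bound, choose the cutoff so that $q\gamma<\beta$, and your bootstrap closes exactly as in Carr. Everything else in your sketch is sound.
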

The theorem gives us a method for computing an approximate solution of (\ref{eq:40}) to any desired degree of accuracy. So, for this task, we will employ power series expansions (note that by Remark \ref{r:1} power series expansions start from second order).

\medskip

Suppose now that (\ref{eq:34}) depends on a vector of parameters $\varepsilon\in\mathbb R^p$:
\begin{align}
\label{eq:43}
& \dot x= Ax+f(x,y,\varepsilon),\notag \\
&\dot y=By+g(x,y,\varepsilon), \ \ (x,y,\varepsilon)\in\mathbb R^c\times \mathbb R^s\times \mathbb R^p,
\end{align}
where
\begin{align}
&f(0,0,0)=0, \ \ Df(0,0,0)=0, \notag \\
&g(0,0,0)=0, \ \ Dg(0,0,0)=0.
\end{align}
with the same assumptions as in (\ref{eq:34}).

Following Wiggins \cite{Wigg2,Wigg}, we will handle parameterized systems including the parameter $\varepsilon$ as a new dependent variable as follows
\begin{align}
\label{eq:44}
& \dot x= Ax+f(x,y,\varepsilon),\notag \\
& \dot \varepsilon=0, \notag \\
&\dot y=By+g(x,y,\varepsilon), \ \ (x,y,\varepsilon)\in\mathbb R^c\times \mathbb R^s\times \mathbb R^p,
\end{align}
This system has a fixed point at $(x,y,\varepsilon)=(0,0,0)$. The matrix associated with the linearization of (\ref{eq:44}) around this fixed point has $s$ eigenvalues with negative real part and $c + p$ eigenvalues with zero real part. Let us now apply center manifold theory. Modifying definition given in formula (\ref{eq:center}), a center manifold will be represented as the graph of $h(x, \varepsilon)$ for $x$ and $\varepsilon$ sufficiently small. Theorem \ref{th:3} still applies, with the vector field reduced to the center manifold given by
\begin{align}
\label{eq:45}
& \dot u= Au+f\left(u,h(u,\varepsilon),\varepsilon\right),\notag \\
& \dot \varepsilon=0,\ \ \ \ \ \ \ \ \ \ \ \ \ \ \ \ \ \ \ \ \ \ \ \ \ \  \ (u,\varepsilon)\in\mathbb R^c\times \mathbb R^p.
\end{align}
Let us calculate the center manifold.
 Using invariance of the graph of $h(x,\varepsilon)$ under the dynamics generated by (\ref{eq:44}), we have
\begin{equation}
\label{eq:46}
\dot y=D_x h(x,\varepsilon) \dot x+D_\varepsilon h(x,\varepsilon) \dot\varepsilon=Bh(x,\varepsilon)+g(x,h(x,\varepsilon),\varepsilon).
\end{equation}

Substituting (\ref{eq:44}) into (\ref{eq:46}) results in the following quasi-linear partial differential equation that $h(x,\varepsilon)$ must satisfy in order for its graph to be a center manifold.
\begin{align}
\label{eq:48}
&\mathcal N\left(h(x,\varepsilon)\right)\equiv D_x h(x,\varepsilon) \left[Ax+f(x,h(x,\varepsilon),\varepsilon)\right]+\notag \\
&-Bh(x,\varepsilon)-g\left(x,h(x,\varepsilon),\varepsilon\right)=0
\end{align}

Although center manifolds exist, they do not need to be unique. This can be seen from a well-known example due to Anosov (see \cite{Si85}, \cite{Wigg}). It can be proven (see, among others, \cite{Carr81} as reported in \cite{Wigg}) that any couple of center manifolds of a given fixed point differ by (at most) exponentially small terms. Thus, the Taylor series expansions of any two center manifolds agree to all orders.

Moreover, it can be shown that, due to the attractive nature of the center manifold, certain orbits (for example, fixed points, periodic orbits, homoclinic orbits, and heteroclinic orbits) that remain close to the origin for all the time must be on every center manifold of a given fixed point.

\subsection{A different viewpoint: Singular Perturbations}
\label{sec:singpert}
For this subsection we will refer to the widespread book by W. Wasow \cite{Wa02}, and - in particular - to its relevant section on singular perturbations. A systematic study of the qualitative aspects of such singular perturbation problems can be found in a series of papers by Tihonov (\cite{Tik48}, \cite{Tik50} and \cite{Tik52}).

We consider differential systems of the form
\begin{align}
\label{eq:c1}
\frac{dx}{dt}&=f(x,y)\notag \\
\varepsilon\frac{dy}{dt}&=g(x,y),
\end{align}
where $x$ is $c$-dimensional vector and $y$ an $s$-dimensional vector. All variables are real, and $\varepsilon$ is positive.

We assume that:
\begin{itemize}
  \item [(A)] The functions $f$ and $g$ in (\ref{eq:c1}) are continuous in an open region $\Omega$ of the $(x,y)$-space.
  \item [(B)] There is an $s$-dimensional vector function $\phi(x)$
continuous in $\xi_1\leq x\leq \xi_2$ such that the points $(x,\phi(x))$, for all $\xi_1\leq x\leq \xi_2$, are in $\Omega$ and
$$g(x,\phi(x))\equiv0.$$
  \item [(C)] There exists a number $\eta>0$, independent of $x$, such that the relations
$$\|y-\phi(x)\|<\eta, \ \ y\neq\phi(x) \ \ \text{in} \ \ \xi_1\leq x\leq \xi_2$$
imply
$$g(x,y)\neq0, \ \ \text{in} \ \ \xi_1\leq x\leq \xi_2.$$
\end{itemize}
The function $\phi(x)$ will be referred to as a root of the equation $g(x,y) = 0$.
It is not excluded that $g(x,y) = 0$ may have other roots besides $\phi(x)$.
A root $\phi(x)$ that satisfies condition C will be called \emph{isolated} in $\xi_1\leq x\leq \xi_2$.
\begin{definition}
The system of differential equations
\begin{equation}
\label{eq:c2}
\varepsilon\frac{dy}{dt}=g(x,y)
\end{equation}
in which $x$ is a parameter, will be called the boundary layer equation belonging to the system (\ref{eq:c1}).

To (\ref{eq:c1}) there corresponds the reduced (or degenerate) system
\begin{align}
\label{eq:c1bis}
\frac{dx_0}{dt}&=f(x_0,y_0)\notag \\
0&=g(x_0,y_0).
\end{align}
The solutions of (\ref{eq:c1}) and (\ref{eq:c1bis}) define trajectories $\left(x(t,\varepsilon),y(t,\varepsilon)\right)$ and $\left(x_0(t),y_0(t)\right)$ in the $(x,y)$-space.

We also assume:
\end{definition}
\begin{itemize}
  \item [(D)] The singular point $y=\phi(x)$ of the boundary layer equation (\ref{eq:c2}) is asymptotically stable for all $\xi_1\leq x\leq \xi_2$.
\end{itemize}
The root $\phi(x)$ will be called, briefly, a stable root in $\xi_1\leq x\leq \xi_2$, if assumption (D) is satisfied.

In accordance with our previous terminology we refer to the problem consisting of equations (\ref{eq:c1}) together with the initial condition
\begin{equation}
\label{eq:c3}
x=\alpha, \ \ y=\beta, \ \ \text{for} \ t=0
\end{equation}
as the full problem. The reduced problem is here defined by
\begin{align}
\label{eq:c4}
\frac{dx}{dt}&=f(x,\phi(x))\notag \\
y&=\phi(x),
\end{align}
\begin{equation}
\label{eq:c5}
x=\alpha, \ \ \ \text{for} \ t=0
\end{equation}
The differential equation (\ref{eq:c4}) is, of course, obtained by setting $\varepsilon=0$ in (\ref{eq:c1}) and determining the root $y = \phi(x)$ of the equation $g(x,y) = 0$. Moreover, we assume:
\begin{itemize}
  \item [(E)] The full problem, as well as the reduced one, has a unique solution in an interval $0\leq t\leq T$.
  \item [(F)] The asymptotic stability of the singular point $y = \phi(x)$ is uniform with respect to $x$ in $\xi_1\leq x\leq \xi_2$.
\end{itemize}
Let $\mu>0$. The set of points in the $(x,y)$-space for which the inequalities
$$\|y-\phi(x)\|<\mu, \ \ \ \xi_1\leq x\leq \xi_2$$
hold will be called a ``$\mu$-tube''. The set
$$\|y-\phi(x)\|=\mu, \ \ \ \xi_1\leq x\leq \xi_2$$
constitutes the ``lateral boundary'' of the $\mu$-tube.
\begin{lemma}
\label{l:1}
Suppose assumptions (A) to (F) are satisfied. Let $\mu>0$ be arbitrary but so small that the closure of the $\mu$-tube lies in $\Omega$. There exist then two numbers $\gamma(\mu)$ and $\varepsilon(\mu)$ such that for $\varepsilon<\varepsilon(\mu)$ the following is true: Any solution of the full equation that is in the interior of the $\mu$-tube for some value $\tilde{t}$ of $t$, $0\leq \tilde{t}\leq T$, and in the closure of the $\mu$-tube for all $t$ in $\tilde{t} \leq t < T$, does not meet the lateral surface of the $\mu$-tube for $\tilde{t} \leq t < T$.
\end{lemma}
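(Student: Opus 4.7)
\emph{Plan of proof.} The plan is to use a Lyapunov function $V(x,y)$ for the boundary-layer equation (\ref{eq:c2}), provided by the uniform asymptotic stability hypotheses (D)--(F), and to show that along trajectories of the full system (\ref{eq:c1}) the time derivative $dV/dt$ is dominated by the fast-scale term $\varepsilon^{-1}\nabla_y V\cdot g$, which is strictly negative on a uniform annular neighbourhood of the lateral surface of the $\mu$-tube once $\varepsilon$ is sufficiently small. Combined with the closure hypothesis, this forces any trajectory entering the interior of the $\mu$-tube to remain strictly inside.

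Concretely, the parameter-dependent converse Lyapunov theorem applied to (\ref{eq:c2}) under (D)--(F) provides a $C^1$ function $V(x,y)$, defined on the $\eta$-tube over $[\xi_1,\xi_2]$, together with class-$\mathcal{K}$ functions $\alpha_1,\alpha_2,\alpha_3$ such that, uniformly in $x\in[\xi_1,\xi_2]$,
\begin{equation*}
\alpha_1(\|y-\phi(x)\|)\leq V(x,y)\leq \alpha_2(\|y-\phi(x)\|),\qquad \nabla_y V(x,y)\cdot g(x,y)\leq -\alpha_3(\|y-\phi(x)\|).
\end{equation*}
Along a trajectory of (\ref{eq:c1}) one has $dV/dt=\nabla_x V\cdot f+\varepsilon^{-1}\nabla_y V\cdot g$; the first term is bounded in modulus by a constant $M(\mu)$ on the compact closure of the $\mu$-tube. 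I would then select a safety radius $\nu_0\in(0,\mu)$ with $\alpha_2(\nu_0)<\alpha_1(\mu)$ (possible by strict monotonicity of the $\alpha_i$), set $\gamma(\mu):=\alpha_3(\nu_0)$ and $\varepsilon(\mu):=\gamma(\mu)/(2M(\mu))$, and conclude $dV/dt\leq -\gamma(\mu)/(2\varepsilon)<0$ on the annular shell $\mathcal A=\{\nu_0\leq\|y-\phi(x)\|\leq\mu\}$, for every $\varepsilon<\varepsilon(\mu)$.

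The no-exit step then proceeds by contradiction: suppose the trajectory reaches the lateral surface at a first time $t^*\in(\tilde t,T)$, so that $V(x(t^*),y(t^*))\geq\alpha_1(\mu)$. Let $t_1$ be the last time in $[\tilde t,t^*]$ at which $\|y-\phi(x)\|\leq\nu_0$, or $t_1=\tilde t$ if no such time exists. On $[t_1,t^*]$ the trajectory lies in $\mathcal A$ (by minimality of $t^*$ and the closure hypothesis), hence $V$ is strictly decreasing there. If $t_1>\tilde t$, then $V(t^*)<V(t_1)\leq\alpha_2(\nu_0)<\alpha_1(\mu)$, contradicting $V(t^*)\geq\alpha_1(\mu)$. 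The remaining case $t_1=\tilde t$ (trajectory stays in $\mathcal A$ throughout $[\tilde t,t^*]$) gives the integrated bound $t^*-\tilde t=O(\varepsilon)$ from the strict $V$-decrease; one closes the argument by comparing with the frozen boundary-layer flow at $x=x(\tilde t)$, on which $\|y-\phi(x(\tilde t))\|$ itself decays by uniform asymptotic stability, possibly after a small further reduction of $\varepsilon(\mu)$.

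\emph{Main obstacle.} The key technical subtlety is the mismatch between level sets of $V$ and the Euclidean $\|y-\phi(x)\|$-tubes: since the Lyapunov function is not radial in the distance to the slow manifold, one cannot directly read off decrease of $\|y-\phi(x)\|$ from decrease of $V$. The safety-radius construction $\alpha_2(\nu_0)<\alpha_1(\mu)$ handles trajectories that visit the $\nu_0$-tube, while the corner scenario requires the short-time frozen-flow comparison. The uniformity in $x$ guaranteed by (F) is indispensable for making $\gamma(\mu)$, $\varepsilon(\mu)$ and $M(\mu)$ depend only on $\mu$, and not on the particular trajectory under consideration.
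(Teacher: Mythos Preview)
The paper does not prove this lemma; it is quoted from Wasow's book as a preliminary result, so there is no in-paper argument to compare against. Your Lyapunov-function strategy is the standard modern route and the main line (choose $V$ from a parameter-uniform converse Lyapunov theorem, show $dV/dt\le -\gamma/(2\varepsilon)$ on an annular shell, trap the trajectory by a sublevel set once it visits the $\nu_0$-core) is correct.

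The genuine gap is the ``corner case'' you yourself flag, and your proposed fix does not close it. If the trajectory starts at $\tilde t$ already in the shell $\nu_0\le\|y-\phi(x)\|<\mu$ and never enters the $\nu_0$-core before the putative exit time $t^{*}$, the $V$-decrease only gives $\alpha_1(\mu)\le V(t^{*})<V(\tilde t)\le\alpha_2(r_0)$ with $r_0<\mu$, which is no contradiction once $r_0>\alpha_2^{-1}(\alpha_1(\mu))$. Your appeal to the frozen boundary-layer flow on the short interval $t^{*}-\tilde t=O(\varepsilon)$ fails because uniform asymptotic stability of $y=\phi(x)$ for $dy/d\tau=g(x,y)$ does \emph{not} preclude transient overshoot of $\|y-\phi(x)\|$: take $g(x,y)=Ay$ with $A=\begin{pmatrix}-1&-10\\0&-1\end{pmatrix}$ and $\phi\equiv 0$; then $0$ is uniformly asymptotically stable, yet a frozen trajectory starting at $y_0=c(1,-1)$ with $\|y_0\|\approx 0.34\mu$ reaches $\|y\|=\mu$ exactly once before decaying. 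With $f\equiv 0$ this is the full system for every $\varepsilon$, and it furnishes a solution that starts in the interior of the $\mu$-tube, remains in its closure for all time, and grazes the lateral surface---so the lemma as literally transcribed cannot be saved by shrinking $\varepsilon$ alone.

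This strongly suggests that the dangling constant $\gamma(\mu)$ in the statement is not cosmetic: in Wasow's formulation it almost certainly enters the hypothesis (e.g.\ the trajectory is required to be inside the $\gamma(\mu)$-tube at time $\tilde t$, with $\gamma(\mu)<\mu$ chosen so that $\alpha_2(\gamma(\mu))<\alpha_1(\mu)$). With that reading your argument goes through cleanly and the corner case evaporates, since every admissible starting point already lies inside the trapping $V$-sublevel set. Alternatively, you can reformulate the conclusion in terms of $V$-sublevel tubes rather than Euclidean $\mu$-tubes; that version is what is actually needed downstream for Tihonov's theorem, and your proof establishes it without further work.
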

The lemma states that, for small $\varepsilon$, any solution that comes close to the curve $y = \phi(x)$ in $\xi_1\leq x\leq \xi_2$ remains close to it, as long as $\xi_1\leq x\leq \xi_2$.

For a convenient formulation of Tihonov's Theorem, according to \cite{Wa02}, we introduce one more term.
\begin{definition}
\label{def:9}
A point $(\alpha,\beta) \in \Omega$, $\xi_1\leq \alpha\leq \xi_2$ is said to lie in the domain of influence of the stable root $y = \phi(x)$ if the solution of the problem
$$dy/d\tau = g(\alpha,y),\ \  y(0) = \beta$$
exists and remains in $\Omega$ for all $\tau > 0$, and if it tends to $\phi(\alpha)$, as $\tau\rightarrow+\infty$.
\end{definition}
\begin{theorem}
\label{th:6}
Let Assumptions (A) to (F) be satisfied and let $(\alpha,\beta)$ be a point in the domain of influence of the root $y = \phi(x)$. Then the solution $x(t,\varepsilon)$, $y(t,\varepsilon)$ of the full initial value problem (\ref{eq:c1}), (\ref{eq:c3}) is linked with the solution $(x_0(t)$, $y_0(t) = \phi(x_0(t)))$ of the reduced problem (\ref{eq:c4}), (\ref{eq:c5}) by the limiting relations
\begin{align}
\lim_{\varepsilon\rightarrow0}x(t,\varepsilon)&=x_0(t), \qquad \qquad \qquad 0\leq t\leq T_0\notag \\
\lim_{\varepsilon\rightarrow0}y(t,\varepsilon)&=y_0(t)=\phi(x_0(t)) \qquad  0< t\leq T_0
\end{align}
Here $T_0$ is any number such that $y = \phi(x_0(t))$ is an isolated stable root of $g\left(x_0(t),y\right) = 0$ for $0\leq t\leq T_0$. The convergence is uniform in $0\leq t\leq T_0$,
for $x(t,\varepsilon)$, and in any interval $0<t_1\leq t\leq T_0$ for $y(t,\varepsilon)$.
\end{theorem}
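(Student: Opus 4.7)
The plan is to exploit the two-timescale structure by splitting the interval $[0,T_0]$ into an initial boundary layer $[0,t^*(\varepsilon)]$ on which $y(t,\varepsilon)$ is rapidly attracted toward the slow manifold $y=\phi(x)$, and an outer region $[t^*(\varepsilon),T_0]$ on which the trajectory remains trapped inside a thin tube around that manifold and therefore $x(t,\varepsilon)$ nearly satisfies the reduced equation (\ref{eq:c4}). Throughout, $t^*(\varepsilon)\to 0$ as $\varepsilon\to 0$, so that the boundary layer collapses to $t=0$ in the limit, which is exactly why the convergence of $y$ is uniform only on intervals bounded away from zero.

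First I would treat the boundary layer. Introducing the fast time $\tau=t/\varepsilon$, the system (\ref{eq:c1}) becomes
\begin{equation*}
\frac{dx}{d\tau}=\varepsilon f(x,y),\qquad \frac{dy}{d\tau}=g(x,y),
\end{equation*}
so on any interval $0\le\tau\le T^*$ of fast time, $x(\tau,\varepsilon)$ differs from $\alpha$ by $O(\varepsilon)$ uniformly, and $y(\tau,\varepsilon)$ is approximated by the solution of the frozen boundary layer equation $dy/d\tau=g(\alpha,y)$ with $y(0)=\beta$ on compact $\tau$-intervals by continuous dependence on parameters. By Definition \ref{def:9} this frozen solution tends to $\phi(\alpha)$ as $\tau\to\infty$; given any $\mu>0$ one can therefore choose $T^*(\mu)$ so large that the frozen orbit lies in the interior of the $\mu/2$-tube for $\tau\ge T^*$, and then choose $\varepsilon_0(\mu)$ small enough that the true $y(\tau,\varepsilon)$ lies in the interior of the $\mu$-tube at $\tau=T^*$. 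Setting $t^*(\varepsilon)=\varepsilon T^*(\mu)$, we obtain that $t^*(\varepsilon)\to 0$ and $\|y(t^*(\varepsilon),\varepsilon)-\phi(x(t^*(\varepsilon),\varepsilon))\|<\mu$.

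Next comes the outer region. From the moment $t=t^*(\varepsilon)$ the trajectory is in the interior of the $\mu$-tube, so Lemma \ref{l:1} guarantees that it cannot leave the $\mu$-tube on $[t^*(\varepsilon),T_0]$ provided $\varepsilon<\varepsilon(\mu)$. On this interval $y(t,\varepsilon)=\phi(x(t,\varepsilon))+r(t,\varepsilon)$ with $\|r(t,\varepsilon)\|<\mu$, so
\begin{equation*}
\frac{d}{dt}x(t,\varepsilon)=f\bigl(x(t,\varepsilon),\phi(x(t,\varepsilon))\bigr)+\bigl[f(x(t,\varepsilon),\phi(x)+r)-f(x(t,\varepsilon),\phi(x))\bigr].
\end{equation*}
Using a local Lipschitz bound for $f$ on a compact neighborhood of the manifold, the bracketed term is $O(\mu)$ uniformly. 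Comparing with (\ref{eq:c4}) via Gronwall's inequality, and absorbing the initial discrepancy $\|x(t^*(\varepsilon),\varepsilon)-x_0(t^*(\varepsilon))\|=O(\varepsilon)+o(1)$, one concludes $\sup_{t^*\le t\le T_0}\|x(t,\varepsilon)-x_0(t)\|=O(\mu)$; since $\mu$ was arbitrary and $x(t,\varepsilon)=\alpha+O(t^*)=x_0(0)+o(1)$ on $[0,t^*]$, this gives the uniform convergence of $x(t,\varepsilon)$ to $x_0(t)$ on the whole of $[0,T_0]$. The convergence $y(t,\varepsilon)\to\phi(x_0(t))$ then follows from $y=\phi(x)+O(\mu)$ and continuity of $\phi$, uniformly on $[t_1,T_0]$ for any $t_1>0$, since we can force $t^*(\varepsilon)<t_1$ by taking $\varepsilon$ small.

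The main obstacle is the matching between the two regimes: one must verify that $t^*(\varepsilon)$ can be chosen simultaneously small enough that $x$ has barely moved (so that the frozen boundary layer approximation with $x\equiv\alpha$ is accurate) and large enough (in fast time $\tau$) that $y$ has genuinely entered the $\mu$-tube. This is exactly where assumption (F), the uniform asymptotic stability of $\phi(x)$ in $x$, is crucial: without uniformity, shrinking the $\mu$-tube along the slow trajectory $x_0(t)$ might demand an $\varepsilon$-dependence that destroys the estimate. Granted this uniformity, together with the topological trapping provided by Lemma \ref{l:1}, the Gronwall-type closure of the argument is then routine.
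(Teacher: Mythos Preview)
The paper does not supply its own proof of Theorem~\ref{th:6}; it is quoted as a preliminary result from Tihonov's original papers \cite{Tik48,Tik50,Tik52} and Wasow's monograph \cite{Wa02}, and the text moves on immediately after the statement. So there is nothing to compare against in the paper itself.

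That said, your sketch is the classical argument and is essentially correct: the boundary-layer/outer-region decomposition, the use of Definition~\ref{def:9} to push $y$ into the $\mu$-tube in fast time, the trapping via Lemma~\ref{l:1}, and the Gronwall closure for $x$ are exactly the standard ingredients. One point to flag: your Gronwall step invokes a local Lipschitz bound on $f$, but Assumption~(A) only gives continuity and Assumption~(E) gives uniqueness without specifying a mechanism. In the classical treatments this gap is closed either by assuming $C^1$ regularity from the outset or by a continuous-dependence argument that does not require Lipschitz; you should state explicitly which route you take so the estimate $\sup_{t^*\le t\le T_0}\|x(t,\varepsilon)-x_0(t)\|=O(\mu)$ is fully justified.
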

Tihonov's Theorem \ref{th:6} is only the first step in the asymptotic solution of initial value problems of the singular perturbation type. The most natural approach to this problems is to attempt a solution ({\it outer solution}) in the form of a series in powers of $\varepsilon$:
\begin{equation}
\label{eq:c6}
x=\sum_{r=0}^\infty x_r(t) \varepsilon^r, \ \ \ y=\sum_{r=0}^\infty y_r(t) \varepsilon^r
\end{equation}
and to determine the coefficients $x_r(t)$, $y_r(t)$ by means of formal substitution and comparison of coefficients.

It is clear that we have to relate the series (\ref{eq:c6}) to the behavior of the solution of (\ref{eq:c1}) in the boundary layer, as shown, for example, in \cite{Hein67,SS,perturbation}. For values of $t$ that are of order $O(\varepsilon)$ the solution to our perturbation problem can be found starting from the stretching transformation $t=\tau \varepsilon$. Hence, the stretched form of the original problem is
\begin{align}
\label{eq:c11}
\frac{dx}{d\tau}&=\varepsilon f\left(x,y\right), \ \ \ \frac{dy}{d\tau}=g\left(x,y\right), \notag \\
x&=\alpha, \ \ \ y=\beta, \ \ \ \text{for} \ \tau=0.
\end{align}
Also in this case we determine the solution of the transient phase ({\it inner solution}) in terms of a series in powers of $\varepsilon$.

The developments of the passages is beyond the scope of this paper. For the other accounts and a more detailed discussion, see \cite{Wa02}.

\section{Main results}

\subsection{The Heineken-Tsuchiya-Aris system}

Let us consider the enzymatic reaction described in (\ref{eq:a1}) and (\ref{eq:a3}).
Clearly $(X,C)=(0,0)$ is a fixed point of (\ref{eq:a3}).
Following \cite{Hein67}, let us first adimensionalize equations (\ref{eq:a3}). Let us observe that we could use different adimensionalization procedures, in particular using  the parameter $\varepsilon_{SS} =\frac{E_T}{S_T + K_M}$, as proposed in  \cite{SS}. However, we follow the simpler scheme shown in \cite{Hein67}, just in order to test our theoretical results and compare them with the results shown in \cite{Carr81}:
\begin{align}
\label{eq:a9s}
\frac{d u}{d\tau}&=-u + (u + \kappa - \lambda)v,\ \ \ \ \ \ \ u(0)=1, \notag \\
\varepsilon\frac{d v}{d\tau}&=u - (u + \kappa)v, \ \ \ \ \ \ \ \ \ \ \ \ \ \ \ v(0)=0.
\end{align}
where
$$\tau=k_1 E_T t, \ \ u=\frac{X}{X_T}, \ \ v=\frac{C }{E_T}, \ \ \lambda=\frac{k_2}{k_1 X_T},$$
and
$$\kappa=\frac{k_2+k_{-1}}{k_1 X_T}=\frac{K_M}{X_T}, \ \ \varepsilon=\frac{E_T}{X_T}.$$
This is the Heineken-Tsuchiya-Aris system \cite{Hein67}. Carr (\cite{Carr81}, p.8, example 3) uses equations
\begin{align}
\label{eq:a10carr}
\frac{d u}{d\tau}&=-u + (u + c)v,\notag \\
\varepsilon\frac{d v}{d\tau}&=u - (u + 1)v.
\end{align}
To obtain (\ref{eq:a9s}) from (\ref{eq:a10carr}) just impose $\kappa+\lambda=c$ and $\kappa=1$. We will start from (\ref{eq:a9s}) for having a more realistic system. By applying the sQSSA (which corresponds to impose $\varepsilon=0$), we have the reduced system ({\it outer solution}) of (\ref{eq:a9s})
\begin{align}
\label{eq:a11}
\frac{d u}{d\tau}&=-\frac{\lambda u}{\kappa+u},\notag \\
v&=\frac{u}{\kappa+u}.
\end{align}

As above remarked, Heineken et al. \cite{Hein67} and Dvo\v{r}\'{a}k and \v{S}i\v{s}ka \cite{dvorak} quote Tihonov's Theorem in order to justify the sQSSA.

The aim of this subsection is now to determine the center manifold for (\ref{eq:a9s}), using the techniques described in \cite{Wigg2,Wigg} and to show that it is asymptotically equivalent to the singular points related to Tihonov theory.

To this aim, let us now set $\tau=\varepsilon s$. Equations (\ref{eq:a9s}) can be rewritten in the equivalent form ({\it inner solution})
\begin{align}
\label{eq:a13}
\frac{d u}{ds}&=\varepsilon \varphi(u,v),  \notag \\
\frac{d v}{ds}&=u - (u + \kappa)v.
\end{align}
where $\varphi(u,v)=-u + (u + \kappa - \lambda)v$. In order to obtain for (\ref{eq:a13}) a block form, of the type (\ref{eq:34}), where the submatrix having eigenvalues with zero real parts is separated from the submatrix having eigenvalues with negative real parts, we operate the substitution $w:=u-\kappa v$, i.e., $v=\frac{u-w}{\kappa}$. Hence,
$$\varphi(u,w)=-u + (u + \kappa - \lambda)\frac{u-w}{\kappa}$$
and
\begin{align}
\frac{d w}{ds}&=\frac{d u}{ds}-\kappa \frac{d v}{ds}  \notag \\
&=\varepsilon \varphi(u,w)-\kappa u+(u + \kappa)(u-w)=\varepsilon \varphi(u,w)+u(u-w)-\kappa w.
\end{align}
Following \cite{Wigg}, the way we will handle parametrized systems consists of including the parameter $\varepsilon$ as a new dependent variable as in (\ref{eq:a14}), which merely acts to augment the matrix $A$ by adding a new center direction that has no dynamics. In this way, system (\ref{eq:a13}) becomes
\begin{align}
\label{eq:a14}
\frac{d u}{ds}&=\varepsilon \varphi(u,w),  \notag \\
\frac{d w}{ds}&=-\kappa w+u(u-w)+\varepsilon \varphi(u,w), \notag \\
\frac{d \varepsilon}{ds}&=0
\end{align}
where the parameter $\varepsilon$ is a new variable and the system could have also other fixed points.

The associated linearized system has a diagonal form, where the eigenvalues are given by $0$ (multiplicity $2$) and $-\kappa$.

To find a center manifold, all we need to do is to solve (\ref{eq:40}) for system (\ref{eq:a14}), employing Theorem \ref{th:5}, which gives us a method for computing an approximate solution of (\ref{eq:40}) to any desired degree of accuracy. Referring to (\ref{eq:40}) and (\ref{eq:34}), $A=0$, $B=-\kappa$, so we search for a function $w = h(u,\varepsilon)$ such that
\begin{equation}
\label{eq:a15}
D_u h(u,\varepsilon) \left(0+f(u,h(u,\varepsilon),\varepsilon)\right)+\kappa h(u,\varepsilon)-g(u,h(u,\varepsilon),\varepsilon)=0
\end{equation}
where
\begin{align}
f(u,h(u,\varepsilon),\varepsilon)&=\varepsilon \varphi\left(u,h(u,\varepsilon)\right),  \notag \\
g(u,h(u,\varepsilon),\varepsilon)&=u\left(u-h(u,\varepsilon)\right)+\varepsilon \varphi\left(u,h(u,\varepsilon)\right).
\end{align}
Using Theorem \ref{th:5} we assume
\begin{equation}
\label{eq:a16}
h(u,\varepsilon)=a_1 u^2+a_2 u \varepsilon+ a_3 \varepsilon^2+\dots
\end{equation}
Substituting (\ref{eq:a16}) into (\ref{eq:a15}), one has:
\begin{align}
\label{eq:a17}
\varepsilon&\left(2a_1 u+a_2 \varepsilon+\dots\right) \varphi\left(u,h(u,\varepsilon)\right)+ \kappa \left(a_1 u^2+a_2 u \varepsilon+\dots\right)+  \notag \\
-&u\left(u-a_1 u^2-a_2 u \varepsilon +\dots\right)-\varepsilon \varphi\left(u,h(u,\varepsilon)\right)=0
\end{align}
where
\begin{align}
\label{eq:a18}
\varphi\left(u,h(u,\varepsilon)\right)&=-a_1 u^2-a_2 u \varepsilon+\dots -\frac{\lambda}{\kappa} \left(u-a_1 u^2-a_2 u \varepsilon+ \dots\right) \notag \\
&+\frac{u}{\kappa} \left(u-a_1 u^2-a_2 u \varepsilon+ \dots\right) \notag \\
&=-\frac{\lambda}{\kappa} u + \left(-a_1+\frac{1}{\kappa}+\frac{\lambda}{\kappa}a_1\right) u^2+\left(-a_2+\frac{\lambda}{\kappa}a_2\right)u\varepsilon \notag \\
&+\left(-a_3+\frac{\lambda}{\kappa}a_3\right)\varepsilon^2+\dots
\end{align}
Accordingly, substituting (\ref{eq:a18}) into (\ref{eq:a17}), one has
\begin{align}
\label{eq:a19}
&\varepsilon\left(2a_1 u+a_2 \varepsilon+\dots\right)\Biggl[-\frac{\lambda}{\kappa} u + \left(-a_1+\frac{1}{\kappa}+\frac{\lambda}{\kappa}a_1\right) u^2+\left(-a_2+\frac{\lambda}{\kappa}a_2\right)u\varepsilon+ \notag \\
&+ \left(-a_3+\frac{\lambda}{\kappa}a_3\right)\varepsilon^2+\dots \Biggr]+\kappa \left(a_1 u^2+a_2 u \varepsilon+a_3 \varepsilon^2+\dots\right)+   \notag \\
&-u\left(u-a_1 u^2-a_2 u \varepsilon-a_3 \varepsilon^2 +\dots\right)-\varepsilon \Biggl[-\frac{\lambda}{\kappa} u + \left(-a_1+\frac{1}{\kappa}+\frac{\lambda}{\kappa}a_1\right) u^2+\notag \\
&+\left(-a_2+\frac{\lambda}{\kappa}a_2\right)u\varepsilon+ \left(-a_3+\frac{\lambda}{\kappa}a_3\right)\varepsilon^2+\dots\Biggr] =0
\end{align}
Truncating at second order terms, we obtain:
$$\left(k a_1-1\right)u^2+\left(k a_2+\frac{\lambda}{\kappa}\right)u\varepsilon+\kappa a_3 \varepsilon^2+\dots=0$$
Equating terms of the same power to zero gives $a_1=\frac{1}{\kappa}$, $a_2=-\frac{\lambda}{\kappa^2}$ and $a_3=0$. Hence, the center manifold for system (\ref{eq:a14}) is:
\begin{equation}
\label{eq:a20}
h(u,\varepsilon)=\frac{1}{\kappa} u^2-\frac{\lambda}{\kappa^2} u \varepsilon+\dots
\end{equation}
which, for $\kappa=1$, gives the result shown in \cite{Carr81}. Finally, substituting (\ref{eq:a20}) into (\ref{eq:a14}), we obtain the vector field reduced to the center manifold, according to equation (\ref{eq:35}) of Theorem \ref{th:3}. In fact, if $a_1=\frac{1}{\kappa}$, $a_2=-\frac{\lambda}{\kappa^2}$ and $a_3=0$, formula (\ref{eq:a18}) becomes:
$$\varphi\left(u,h(u,\varepsilon)\right)=-\frac{\lambda}{\kappa} u +\frac{\lambda}{\kappa^2} u^2-\frac{\lambda}{\kappa^2} \left(-1+\frac{\lambda}{\kappa}\right)u\varepsilon+\dots$$
Thus:
\begin{align}
\label{eq:a21}
\frac{d u}{ds}&=\varepsilon \left[-\frac{\lambda}{\kappa} u +\frac{\lambda}{\kappa^2}  u^2-\frac{\lambda}{\kappa^2} \left(-1+\frac{\lambda}{\kappa}\right)u\varepsilon+\dots\right],  \notag \\
\frac{d \varepsilon}{ds}&=0
\end{align}
or, in terms of the original time scale,
\begin{align}
\label{eq:a21bis}
\dot u&=\frac{\lambda}{\kappa} u \left[-1 +\frac{u}{\kappa}  -\frac{\varepsilon}{\kappa} \left(-1+\frac{\lambda}{\kappa}\right)+\dots\right],  \notag \\
\dot\varepsilon&=0
\end{align}

Let us now conclude showing that the center manifold obtained following this method is asymptotically sufficiently close to (\ref{eq:a11}). We can obtain back the equation in $v$. In fact, since $v=\frac{u-w}{\kappa}$, from (\ref{eq:a20}) and
$$w=h(u,\varepsilon)=\frac{1}{\kappa} u^2-\frac{\lambda}{\kappa^2} u \varepsilon+\dots$$
we have
\begin{equation}
\label{eq:aaa20}
v=\frac{u}{\kappa}\left(1-\frac{w}{u}\right)=\frac{u}{\kappa}\left(1-\frac{1}{\kappa} u+\frac{\lambda}{\kappa^2}\varepsilon+\dots\right)
\end{equation}
Considering $\varepsilon \ll 1$, one has
\begin{equation}
\label{eq:b10}
v\sim\frac{u}{\kappa}\left(1-\frac{u}{\kappa}\right)\sim \frac{u}{\kappa}\left(\frac{1}{1+\frac{u}{\kappa}}\right) = \frac{u}{\kappa + u} , \ \ \ \text{for} \ u\rightarrow0
\end{equation}
which is the second equation of (\ref{eq:a11}). We can conclude that, supposing $\varepsilon\ll 1$, the center manifold determined in this way approximates the solution given by the sQSSA, which coincides with the root related to the application of Tihonov's Theorem.

In Figure (\ref{figura2}) we compare the sQSSA of system (\ref{eq:a9s}), obtained from (\ref{eq:a10carr}), with the center manifold (\ref{eq:aaa20}), at the zeroth order and at the first order in $\varepsilon$, respectively. Obviously, the latter gives a better approximation of the numerical solution of (\ref{eq:a9s}), while the former well approximates the sQSSA curve, which in fact can be considered the zeroth order term of an asymptotic expansion of the solution in terms of $\varepsilon$.

\begin{figure}[htp]
\centering
\includegraphics[width=0.5\textwidth]{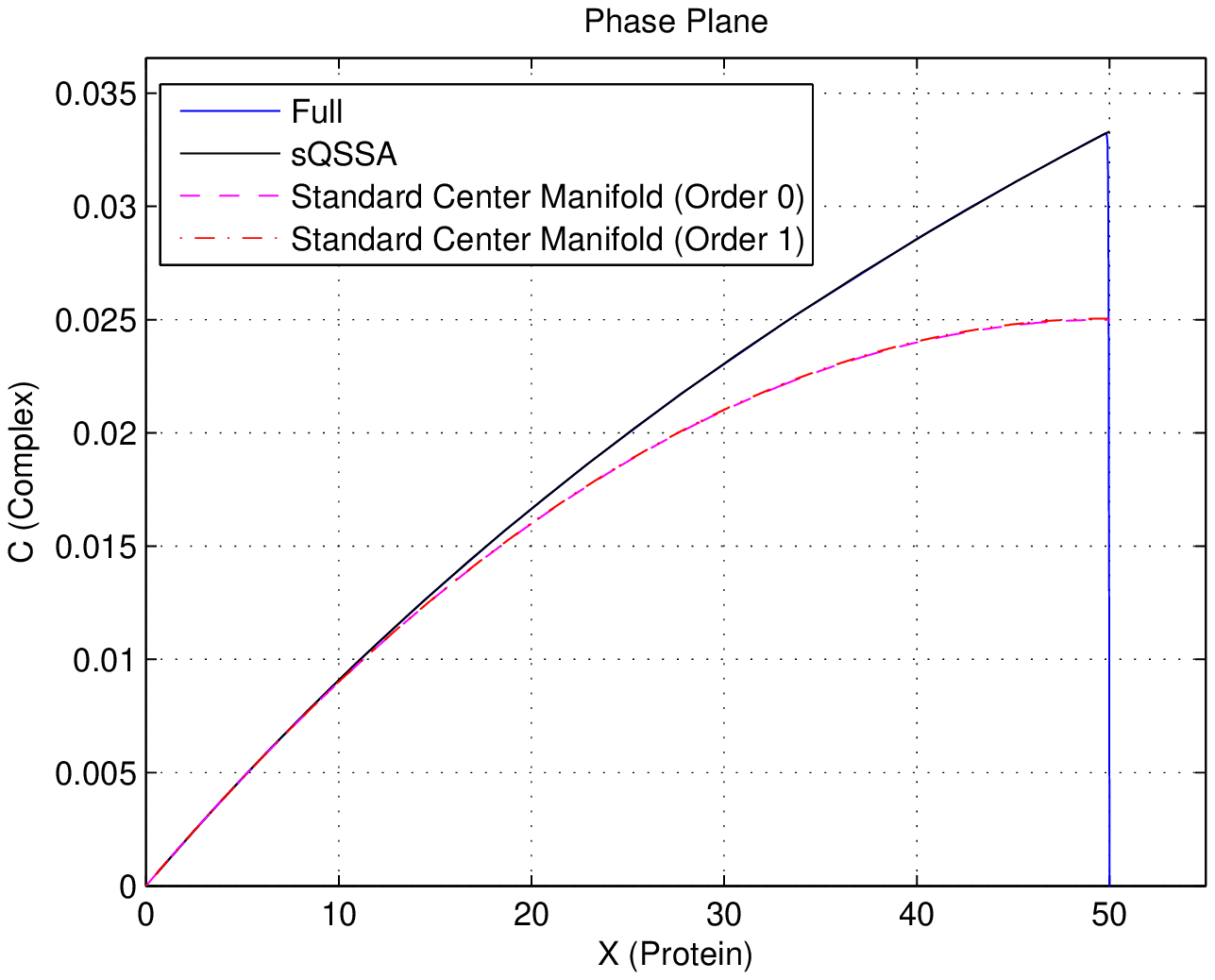}\hfil
\includegraphics[width=0.5\textwidth]{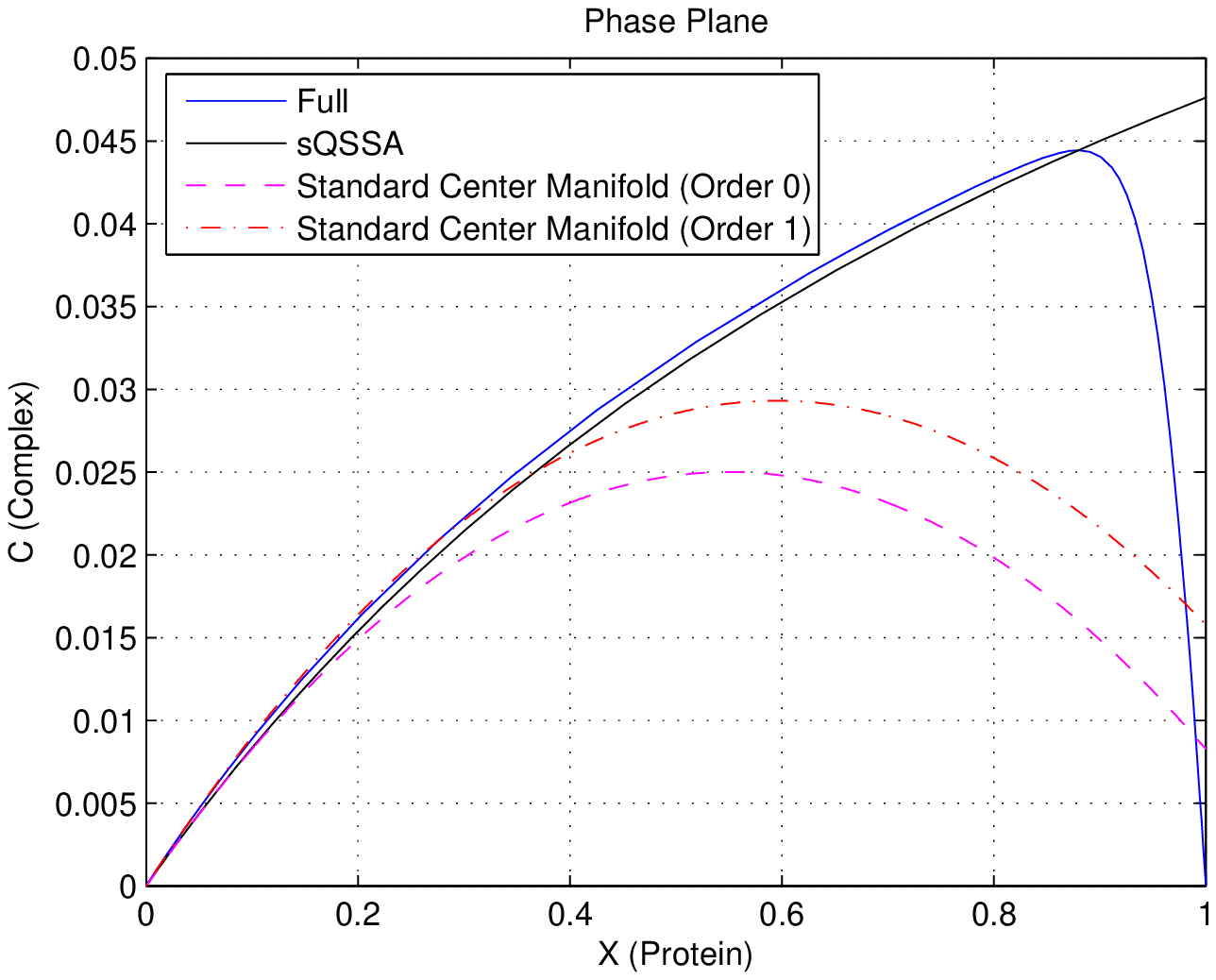}
\caption{
Comparison in the phase space $(X,C)$ of the numerical solution of the system (\ref{eq:a9s}) (blue solid line) with its sQSSA (\ref{eq:a10carr}) (black solid line) and its zeroth order (dashed line) and first order (dashed/dotted line) center manifold (\ref{eq:aaa20}). The parameter sets are the following.
Left: $k_1=0.1; k_2=10; k_{-1}=0.01; E_T=0.1; X_T=50; K_M=100.1; K=100; \varepsilon_{HTA}=0.002; \varepsilon_{SS}=0.0007$. The set was taken from \cite{bulgari}. Right: $k_1=k_2=1; k_{-1}=0.1; E_T=0.1; X_T=1; K_M=1.1; K=1; \varepsilon_{HTA}=0.1; \varepsilon_{SS}=0.05$. The set was taken (and modified) from \cite{Ku11}.
Since in both cases the value of $\varepsilon_{SS}$ is sufficiently small, the different approximations converge to the graph of the numerical solution. In the plot on the right it is possible to appreciate the different behavior of the zero-th order and the first order center manifolds. While the first order manifold approximates in a better way the numerical solution, the zero-th order converges to the sQSSA, that does not approximate sufficiently well the numerical solution, since it is the zero-th order term of the singular perturbation of the solution in terms of the parameter $\varepsilon_{SS}$.}
\label{figura2}
\end{figure}

\subsection{The total quasi-steady state approximation (tQSSA)}

Let us now consider system (\ref{eq:a4}). Let us adimensionalize the system, as in \cite{SchMai02,perturbation}
\begin{align}
\label{eq:aaa12}
\frac{d u}{d\tau}&=-v, \ \ \ \ \ \ \ \ \ \ \ \ \ \ \ \ \ \ \ \ \ \ \ \ \ \ \ \ \ \ \ \ \ \ \ \ \ \ \ \ \ \ \ u(0)=1, \notag \\
\varepsilon\frac{d v}{d\tau}&=\eta \sigma v^2 - (\eta + \kappa_m)v-\sigma u v+u, \ \ \ \ \ \ \ \ \ \ v(0)=0.
\end{align}
where
$$\bar{X}=u X_T, \ \ C=\left(\frac{E_T X_T}{E_T+K_M+X_T}\right) v, \ \ \tau=\frac{E_T+K_M+X_T}{k_2 E_T} t,$$
and
$$\varepsilon=\frac{K E_T}{\left(E_T+K_M+X_T\right)^2}, \ \ K=\frac{k_2}{k_1}$$
with system parameters
$$\sigma=\frac{X_T}{E_T+K_M+X_T}, \ \ \eta= \frac{E_T}{E_T+K_M+X_T}, \ \ \kappa_m=\frac{k_m}{E_T+K_M+X_T}$$
such that $\sigma+\eta+\kappa_m=1$.

By applying the tQSSA (which corresponds to imposing $\varepsilon=0$) to (\ref{eq:aaa12}), we have
$$\eta \sigma v^2 - (\eta + \kappa_m)v-\sigma u v+u=0$$
and, solving in $v$:
\begin{equation}
\label{eq:a29}
v=\frac{\eta + \kappa_m+\sigma u-\sqrt{\left(\eta + \kappa_m+\sigma u\right)^2-4\eta \sigma u}}{2\eta \sigma}
\end{equation}
which represents the singular point (or {\it outer solution}) of (\ref{eq:aaa12}), where $\eta$, $\sigma$, $\kappa_m$ are viewed as fixed positive constants and $\varepsilon$ is the parameter. Its fixed point is $(u,v)=(0,0)$.

The new parameter $\varepsilon$ appears already in \cite{palsson4} and was used in \cite{SchMai02,dingee,perturbation} to determine the asymptotic expansions whose leading term is just the tQSSA. In 2008 Khoo and Hegland \cite{Khoo} applied Tihonov's Theorem \cite{Tik48,Tik50} in order to study the tQSSA, obtaining similar results as in \cite{borghans}.

The aim of this subsection is now to determine the center manifold for (\ref{eq:aaa12}), using the techniques described in \cite{Wigg2,Wigg} and to show that they are asymptotically equivalent to the singular points related to Tihonov theory.

To this aim, let us now set $\tau=\varepsilon s$, system (\ref{eq:aaa12}) can be rewritten in the form ({\it inner solution})
\begin{equation}
\label{eq:a22}
\begin{cases}
\frac{d u}{ds}&=-\varepsilon v, \\
\frac{d v}{ds}&=u- (\eta + \kappa_m)v+\eta \sigma v^2 -\sigma u v \ .
\end{cases}
\end{equation}
In order to obtain for (\ref{eq:a22}) a block form, of the type (\ref{eq:34}), we make the substitution $w:=u- (\eta + \kappa_m)v$, i.e., $v=\frac{u-w}{\eta + \kappa_m}$. Hence,
$$\frac{d w}{ds}=\frac{d u}{ds}-(\eta + \kappa_m) \frac{d v}{ds}=\varepsilon \frac{w-u}{\eta + \kappa_m}+$$
$$- (\eta + \kappa_m)\left(u- (\eta + \kappa_m)\frac{u-w}{\eta + \kappa_m}+\eta \sigma \left(\frac{u-w}{\eta + \kappa_m}\right)^2 -\sigma u \frac{u-w}{\eta + \kappa_m}\right)$$
$$=- (\eta + \kappa_m) w+\sigma u (u-w)+\varepsilon \frac{w-u}{\eta + \kappa_m}- \eta \sigma \frac{\left(u-w\right)^2}{\eta + \kappa_m}$$
Doing so, and introducing the new variable $\varepsilon$, system (\ref{eq:a22}) becomes
\begin{equation}
\label{eq:a23}
\begin{cases}
\frac{d u}{ds}&=\varepsilon \frac{w-u}{\eta + \kappa_m} \\ \\
\frac{d w}{ds}&=- (\eta + \kappa_m) w+\sigma u (u-w)+\varepsilon \frac{w-u}{\eta + \kappa_m}- \eta \sigma \frac{\left(u-w\right)^2}{\eta + \kappa_m} \\
\dot\varepsilon&=0
\end{cases}
\end{equation}
The associated linearized system has a diagonal form and, in fact, the eigenvalues are given by $0$ (with multiplicity $2$) and $-(\eta + \kappa_m)$.

Also in this case the eigenvalue $0$ has multiplicity $2$.

We solve (\ref{eq:40}) for system (\ref{eq:a23}), employing Theorem \ref{th:5} and determine the center manifold. Referring to (\ref{eq:40}) and (\ref{eq:34}), we have that $A=0$, $B=-(\eta + \kappa_m)$. Accordingly, we search a function $h(u,\varepsilon)$ such that
\begin{align}
\label{eq:a25}
&D_u h(u,\varepsilon) \left(0+f(u,h(u,\varepsilon),\varepsilon)\right)+(\eta + \kappa_m) h(u,\varepsilon)-g(u,h(u,\varepsilon),\varepsilon)=0\notag \\
&f(u,h(u,\varepsilon),\varepsilon)=\varepsilon \frac{h(u,\varepsilon)-u}{\eta + \kappa_m},\notag \\
&g(u,h(u,\varepsilon),\varepsilon)=\sigma u (u-h(u,\varepsilon))+\varepsilon \frac{h(u,\varepsilon)-u}{\eta + \kappa_m}- \eta \sigma \frac{\left(u-h(u,\varepsilon)\right)^2}{\eta + \kappa_m}
\end{align}
Using Theorem \ref{th:5} we assume
\begin{equation}
\label{eq:a24}
h(u,\varepsilon)=a_1 u^2+a_2 u \varepsilon+ a_3 \varepsilon^2+\dots
\end{equation}
Substituting (\ref{eq:a24}) into (\ref{eq:a25}), one has:
\begin{align}
\label{eq:a26}
&\varepsilon\left(2a_1 u+a_2 \varepsilon+\dots\right)  \frac{-u+a_1 u^2+a_2 u \varepsilon+ a_3 \varepsilon^2+\dots}{\eta + \kappa_m}+\notag \\
&+ (\eta + \kappa_m) \left(a_1 u^2+a_2 u \varepsilon+ a_3 \varepsilon^2+\dots\right) +\sigma u (-u+a_1 u^2+a_2 u \varepsilon+ a_3 \varepsilon^2+\dots)+ \notag \\
&-\varepsilon \frac{-u+a_1 u^2+a_2 u \varepsilon+ a_3 \varepsilon^2+\dots}{\eta + \kappa_m}+\eta \sigma \frac{\left(-u+a_1 u^2+a_2 u \varepsilon+ a_3 \varepsilon^2+\dots\right)^2}{\eta + \kappa_m}=0
\end{align}
Truncating at second order terms, we obtain:
$$(\eta + \kappa_m) \left(a_1 u^2+a_2 u \varepsilon+ a_3 \varepsilon^2\right)-\sigma u^2+ \frac{\varepsilon u}{\eta + \kappa_m}+\eta \sigma \frac{u^2}{\eta + \kappa_m}=0$$
so,
$$ \left[(\eta + \kappa_m)a_1-\sigma+ \frac{\eta \sigma}{\eta + \kappa_m}\right] u^2+$$
$$+\left[(\eta + \kappa_m)a_2+\frac{1}{\eta + \kappa_m}\right] u \varepsilon+ (\eta + \kappa_m)a_3 \varepsilon^2 =0$$
from which:
$$a_1=\frac{\sigma}{\eta + \kappa_m}- \frac{\eta \sigma}{(\eta + \kappa_m)^2}, \ \ a_2=-\frac{1}{(\eta + \kappa_m)^2}, \ \ a_3=0$$
Hence, the center manifold for system (\ref{eq:a23}) is:
\begin{equation}
\label{eq:a27}
h(u,\varepsilon)=\frac{\sigma \kappa_m}{(\eta + \kappa_m)^2} u^2-\frac{1}{(\eta + \kappa_m)^2} u \varepsilon+\dots
\end{equation}
Finally, substituting (\ref{eq:a27}) into (\ref{eq:a23}) we obtain the vector field reduced to the center manifold, according to equation (\ref{eq:35}) of Theorem \ref{th:3}.
Then:
\begin{equation}
\label{eq:a28}
\begin{cases}
\frac{d u}{ds}&= \frac{\varepsilon}{\eta + \kappa_m}\left[-u+\frac{\sigma \kappa_m}{(\eta + \kappa_m)^2} u^2-\frac{1}{(\eta + \kappa_m)^2} u \varepsilon+\dots\right], \\
\dot\varepsilon&=0
\end{cases}
\end{equation}
or, in terms of the original time scale,
\begin{align}
\label{eq:a28bis}
\dot u&= \frac{u}{\eta + \kappa_m}\left[-1+\frac{\sigma \kappa_m}{(\eta + \kappa_m)^2} u-\frac{1}{(\eta + \kappa_m)^2} \varepsilon+\dots\right],  \notag \\
\dot\varepsilon&=0
\end{align}

Let us show that the center manifold obtained in (\ref{eq:a27}) is asymptotically close to the root given by (\ref{eq:a29}), in terms of Tihonov's Theorem. From (\ref{eq:a27}), and since $v=\frac{u-w}{\eta + \kappa_m}$, with
$$w=h(u,\varepsilon)=\frac{\sigma \kappa_m}{(\eta + \kappa_m)^2} u^2-\frac{1}{(\eta + \kappa_m)^2} u \varepsilon+\dots$$
we have

\begin{equation}
\label{eq:aaa30}
v=\frac{u}{\eta + \kappa_m}\left(1-\frac{w}{u}\right)=\frac{u}{\eta + \kappa_m}\left(1-\frac{\sigma \kappa_m}{(\eta + \kappa_m)^2} u+\frac{1}{(\eta + \kappa_m)^2} \varepsilon+\dots\right)
\end{equation}

Since equation (\ref{eq:a29}) is obtained putting $\varepsilon \ll 1$, one has
\begin{align}
\label{eq:a30}
v\sim\frac{u}{\eta + \kappa_m}\left(1-\frac{\sigma \kappa_m}{(\eta + \kappa_m)^2} u\right)&\sim \frac{u}{\eta + \kappa_m}\left(\frac{1}{1+\frac{\sigma \kappa_m}{(\eta + \kappa_m)^2}u}\right) \notag \\
&=\frac{u}{\eta + \kappa_m+\frac{\sigma \kappa_m}{\eta + \kappa_m} u}, \ \ \ \text{for} \ u\rightarrow0
\end{align}
while, approximating the square root in (\ref{eq:a29}), one has

\begin{align}
\label{eq:a31}
v & = \frac{\eta + \kappa_m+\sigma u}{2\eta \sigma}\left(1-\sqrt{1-\frac{4\eta \sigma u}{\left(\eta + \kappa_m+\sigma u\right)^2}}\right) =
\notag \\
& \frac{\eta + \kappa_m+\sigma u}{2\eta \sigma} \left[1- \sqrt{1 - 4 \varepsilon} \right] \sim \frac{\eta + \kappa_m+\sigma u}{\eta \sigma}
\varepsilon
= \frac{u}{\eta + \kappa_m+\sigma u}, \ \ \ \text{for} \ u\rightarrow0
\end{align}

It follows that both (\ref{eq:a30}) and (\ref{eq:a31}) are asymptotic to $\frac{u}{\eta + \kappa_m}$ when $u\rightarrow0$. This means that both the expressions can be intended as two different approximations of the center manifold.

In Figure (\ref{figura3}) we compare the tQSSA of system (\ref{eq:aaa12}), obtained from (\ref{eq:a29}), with the center manifold (\ref{eq:aaa30}), at the zeroth order and at the first order in $\varepsilon$, respectively. Obviously, the latter gives a better approximation of the numerical solution of (\ref{eq:aaa12}), while the former well approximates the tQSSA curve, which in fact can be considered the zeroth order term of an asymptotic expansion of the solution in terms of $\varepsilon$.

\begin{figure}[htp]
\centering
\includegraphics[width=0.5\textwidth]{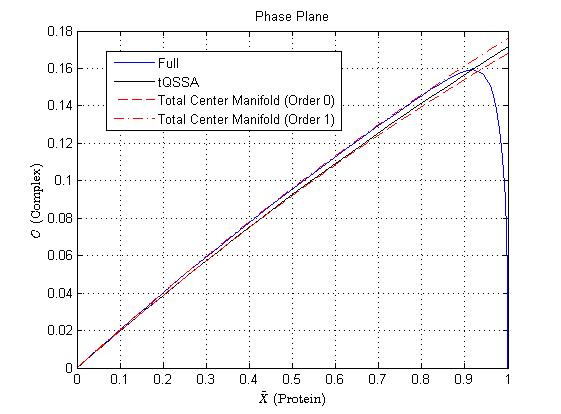}\hfil
\includegraphics[width=0.5\textwidth]{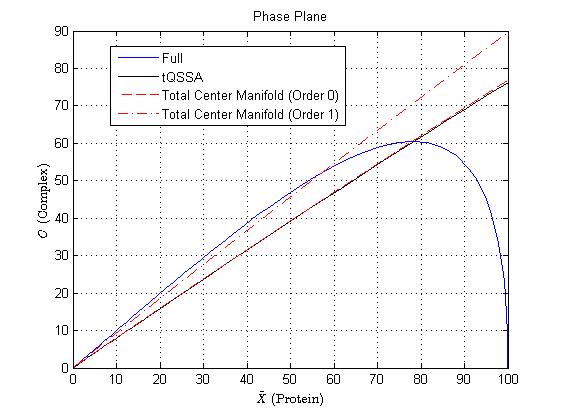}
\caption
{Comparison in the phase space $(\bar{X},C)$ of the numerical solution of the system (\ref{eq:aaa12}) (blue solid line) with its tQSSA (\ref{eq:a29}) (black solid line) and its zeroth order (dashed line) and first order (dashed/dotted line) center manifold (\ref{eq:aaa30}). The parameter sets are the following.
Left: $k_1=k_2=1; k_{-1}=3; E_T=1; X_T=1; K_M=4; K=1; \varepsilon_{HTA}=1; \varepsilon_{SS}=0.2; \varepsilon=0.03$. The set was taken from \cite{Ku11}. Right: $k_1=0.1; k_2=10; k_{-1}=0.01; E_T=400; X_T=100; K_M=100.1; K=100; \varepsilon_{HTA}=4; \varepsilon_{SS}=2; \varepsilon=0.11$. The set was taken from \cite{bulgari}.
In the plot on the left, since the value of $\varepsilon$ is sufficiently small, the different approximations converge to the graph of the numerical solution. In the plot on the right it is possible to appreciate the different behavior of the zero-th order and the first order center manifolds. While the first order manifold approximates in a better way the numerical solution, the zero-th order converges to the tQSSA, that does not approximate sufficiently well the  numerical solution, since it is the zero-th order term of the singular perturbation of the solution in terms of the parameter $\varepsilon$.}
\label{figura3}
\end{figure}

\subsection{A more general viewpoint}
Let us consider now a more general system of the following form ({\it outer solution})

\begin{equation}
\label{eq:bb1}
\begin{cases}
\frac{d u}{d\tau}&= \varphi\left(u,v\right),  \\
\varepsilon \frac{d v}{d\tau}&=au+bv+\psi\left(u,v\right), \ \ a,b\in\mathbb R, \ b<0
\end{cases}
\end{equation}
and the corresponding {\it inner solution}
\begin{equation}
\label{eq:b1}
\begin{cases}
\frac{d u}{ds}&= \varepsilon \varphi\left(u,v\right),  \\
\frac{d v}{ds}&=au+bv+\psi\left(u,v\right), \ \ a,b\in\mathbb R, \ b<0
\end{cases}
\end{equation}
(with $\tau = \varepsilon s$) where
\begin{equation}
\label{eq:b2}
\varphi\left(0,0\right)=\psi\left(0,0\right)=0, \ \ \text{and} \ \ \psi_u\left(0,0\right)=\psi_v\left(0,0\right)=0.
\end{equation}

The origin is a fixed point for (\ref{eq:b1}). Heineken-Tsuchiya-Aris system (\ref{eq:a13}) and the system obtained by the tQSSA approximation (\ref{eq:a22}), are particular cases of the system (\ref{eq:b1})-(\ref{eq:b2}).

We are able to state a more general theorem concerning the center manifold, which is the main result of our paper.

Let $w:=au+bv$; hence:
$$\frac{d w}{ds}=a \frac{d u}{ds}+b\frac{d v}{ds}= a\left[\varepsilon \varphi\left(u,v\right)\right]+b\left[au+bv+\psi\left(u,v\right)\right]$$
$$=bw+a\varepsilon \varphi\left(u,\frac{w-au}{b}\right)+b \psi\left(u,\frac{w-au}{b}\right)$$
Doing so, system (\ref{eq:b1}) becomes, for $a,b\in\mathbb R$ and $b<0$,
\begin{align}
\label{eq:b3}
\frac{d u}{ds}&= \varepsilon \varphi\left(u,\frac{w-au}{b}\right),  \notag \\
\frac{d w}{ds}&=bw+a\varepsilon \varphi\left(u,\frac{w-au}{b}\right)+b \psi\left(u,\frac{w-au}{b}\right) \notag \\
\frac{d \varepsilon}{ds}&=0
\end{align}
The associated linearized system has a block form of type (\ref{eq:34}) and, in fact, the eigenvalues are given by $0$ (with multiplicity $2$) and $b<0$. Thus in every system of the form (\ref{eq:b3}) we are in presence of a center manifold.

We write equation (\ref{eq:40}) for system (\ref{eq:b3}), employing Theorem \ref{th:5}. Referring to (\ref{eq:40}) and (\ref{eq:34}), we have that $A=0$, $B=b$. Accordingly, we search for a function $w=h(u,\varepsilon)$ such that
$$D_u h(u,\varepsilon) \left(0+\varepsilon \varphi\left(u,\frac{w-au}{b}\right)\right)+$$
$$-b h(u,\varepsilon)-a\varepsilon \varphi\left(u,\frac{w-au}{b}\right)-b \psi\left(u,\frac{w-au}{b}\right)=0$$
from which, since $D_u h(u,\varepsilon) \varepsilon \varphi\left(u,\frac{w-au}{b}\right)$ is a function at least of third order in $\varepsilon$ and $u$, while we are interested in a second order expression of function $h(u,\varepsilon)$, we can neglect this term and focus on
\begin{equation}
\label{eq:b4}
b h(u,\varepsilon)+a\varepsilon \varphi\left(u,\frac{w-au}{b}\right)+b \psi\left(u,\frac{w-au}{b}\right)=0
\end{equation}

\begin{theorem}
The center manifold of (\ref{eq:b1}) and the isolated point of (\ref{eq:b1}) are asymptotically equivalent.
\end{theorem}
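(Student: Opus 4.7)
The plan is to follow the same two-step pattern successfully carried out for the Heineken-Tsuchiya-Aris system and the tQSSA system in the preceding subsections: first characterize the center manifold $w = h(u,\varepsilon)$ via the approximation theorem (Theorem \ref{th:5}) applied to (\ref{eq:b4}), then undo the coordinate change $w = au + bv$ to obtain the center manifold in the original $(u,v)$-variables and compare it with the Tihonov isolated root $v = \phi(u)$ of $au + bv + \psi(u,v) = 0$.

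The key observation is that the center-manifold equation (\ref{eq:b4}) degenerates nicely at $\varepsilon = 0$. Indeed, setting $\varepsilon = 0$ in (\ref{eq:b4}) yields $b h(u,0) + b\,\psi\bigl(u,(h(u,0)-au)/b\bigr) = 0$, and if one introduces the auxiliary function $\tilde v(u) := (h(u,0)-au)/b$, so that $h(u,0) = au + b\,\tilde v(u)$, this reduces to
$$ au + b\,\tilde v(u) + \psi\bigl(u,\tilde v(u)\bigr) = 0, $$
which is precisely the defining equation of the Tihonov isolated root of (\ref{eq:bb1}). Since $b<0$ and $\psi_v(0,0)=0$, the partial derivative of the left-hand side with respect to $\tilde v$ at the origin equals $b \ne 0$; the implicit function theorem then gives a unique smooth solution near the origin with $\tilde v(0)=0$, so $\tilde v(u)\equiv \phi(u)$. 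This establishes that the $\varepsilon = 0$ slice of the center manifold, expressed in the original $(u,v)$ coordinates, coincides identically with the Tihonov singular point.

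To obtain the asymptotic equivalence for small $\varepsilon > 0$, I would use Theorem \ref{th:5} with the ansatz $h(u,\varepsilon) = a_1 u^2 + a_2 u\varepsilon + a_3 \varepsilon^2 + \dots$ (the expansion starts at second order because $h(0)=Dh(0)=0$). Substituting into (\ref{eq:b4}) and matching like powers of $u$ and $\varepsilon$ determines the coefficients uniquely, and Theorem \ref{th:5} guarantees that the truncated expansion approximates the true center manifold to any prescribed order $q$ as $(u,\varepsilon)\to (0,0)$. Pulling back via $v = (h(u,\varepsilon)-au)/b$, the resulting expansion for $v$ reduces to $\phi(u)$ when $\varepsilon = 0$ and differs from it by $O(\varepsilon)$ otherwise, exactly as observed in formulas (\ref{eq:b10}) and (\ref{eq:a30}) for the two worked examples.

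The main obstacle I expect is bookkeeping rather than substance: tracking how the Hessian data of $\psi$ at the origin propagate through the substitution $v = (w-au)/b$ and contribute to the coefficients $a_1$ and $a_2$, and verifying that the neglect of the term $D_u h(u,\varepsilon)\,\varepsilon\varphi(u,(h-au)/b)$ justified in the paragraph preceding (\ref{eq:b4}) (as an expression of order at least three in $(u,\varepsilon)$) is genuinely sufficient for a second-order comparison with the Tihonov root. A subtler interpretive point is making ``asymptotic equivalence'' precise uniformly in both limits: the argument above shows that the two candidates agree in their leading $\varepsilon^0$-term at all orders in $u$ and, by Theorem \ref{th:5}, to any order in $\varepsilon$ near the origin, so the equivalence holds in the joint limit $(u,\varepsilon)\to(0,0)$.
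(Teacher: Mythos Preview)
Your proposal is correct and follows the same overall two-step pattern as the paper (characterize $h(u,\varepsilon)$ via Theorem~\ref{th:5}, then compare the $\varepsilon=0$ slice with the Tihonov root), but the heart of your argument is genuinely sharper than the paper's. The paper explicitly computes the quadratic coefficients $\lambda_1,\lambda_2,\lambda_3$ of $h(u,\varepsilon)$, obtains the reduced vector field (\ref{eq:b9}), and then, after observing that (\ref{eq:center1}) and (\ref{eq:center2}) are the \emph{same} algebraic equation, concludes only $h(u,0)\sim\phi(u)$ as $u\to 0$ --- explicitly declining to assert equality ``due to the non-uniqueness of center manifold.'' You instead invoke the implicit function theorem on $au+bv+\psi(u,v)=0$: since the $v$-derivative at the origin is $b+\psi_v(0,0)=b\ne 0$, there is a \emph{unique} smooth root through the origin, hence $\tilde v(u)\equiv\phi(u)$ exactly. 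This is legitimate, because the full center-manifold PDE (before the truncation leading to (\ref{eq:b4})) reduces at $\varepsilon=0$ to exactly that algebraic equation --- the discarded term $D_uh\cdot\varepsilon\varphi$ carries an explicit factor of $\varepsilon$ and so vanishes there. Your route therefore yields a slightly stronger statement (the $\varepsilon=0$ slice of \emph{every} center manifold coincides with the Tihonov root, not merely asymptotically) with less computation; what the paper's more computational route buys is the explicit formulas (\ref{eq:b7}) for $\lambda_1,\lambda_2$, which feed directly into the reduced dynamics (\ref{eq:b9}) and into the first-order-in-$\varepsilon$ center-manifold curves plotted in the worked examples.
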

\begin{proof}
{\bf Step 1.}

Using Theorem \ref{th:5} we assume
\begin{equation}
\label{eq:b5}
h(u,\varepsilon)=\lambda_1 u^2+\lambda_2 u \varepsilon+ \lambda_3 \varepsilon^2+\dots
\end{equation}
and it is trivial to prove that $h(u,\varepsilon)$ satisfies (\ref{eq:b4}) for $\lambda_3=0$. Moreover, from (\ref{eq:b2}),
$$\psi\left(u,\frac{w-au}{b}\right)=\frac{1}{2}\left[\Theta(u^2, uw, w^2)\right]+\dots$$
where $\Theta(u^2, uw, w^2)$ contains the quadratic terms in $u$ and $w$.

Since the terms in $uw$ and $w^2$, with $w=h(u,\varepsilon)=\lambda_1 u^2+\lambda_2 u \varepsilon+\dots$, are at least of third order in $\varepsilon$ and $u$, we consider only term in $u^2$. Therefore,
$$\psi\left(u,\frac{w-au}{b}\right)=\frac{1}{2} \left[\psi_{uu}(0,0)-2\frac{a}{b} \psi_{u,v}(0,0)+\left(\frac{a}{b}\right)^2 \psi_{v,v}(0,0)\right]u^2+\dots$$
while for $\varphi$ it is sufficient to consider the first order expansion in $u$ because, otherwise, in (\ref{eq:b4}) we would have third order terms for $\varepsilon \varphi\left(u,\frac{w-au}{b}\right)$ in $\varepsilon$ and $u$. Thus,
$$\varphi\left(u,\frac{w-au}{b}\right)=\left[\varphi_u(0,0)-\frac{a}{b} \varphi_v(0,0)\right]u+\dots$$
where we recall that $v=\frac{w-au}{b}$.
Accordingly, equation (\ref{eq:b4}) becomes:
\begin{align}
\label{eq:b6}
&b \left(\lambda_1 u^2+\lambda_2 u \varepsilon+\dots \right)+a\varepsilon u\left[\varphi_u(0,0)-\frac{a}{b} \varphi_v(0,0)\right] \notag \\
&+\frac{b}{2} \left[\psi_{uu}(0,0)-2\frac{a}{b} \psi_{u,v}(0,0)+\left(\frac{a}{b}\right)^2 \psi_{v,v}(0,0)\right]u^2+\dots =0
\end{align}
Equating to zero terms of the same power gives
\begin{align}
\lambda_1&=-\frac{1}{2}\left[\psi_{uu}(0,0)-2\frac{a}{b} \psi_{u,v}(0,0)+\left(\frac{a}{b}\right)^2 \psi_{v,v}(0,0)\right] \notag \\
\lambda_2&=-\frac{a}{b}\left[\varphi_u(0,0)-\frac{a}{b} \varphi_v(0,0)\right] \notag \\
\lambda_3&=0
\end{align}
Hence, the center manifold for system (\ref{eq:b1}) is
\begin{align}
\label{eq:b7}
w = h(u,\varepsilon)=&-\frac{1}{2}\left[\psi_{uu}(0,0)-2\frac{a}{b} \psi_{u,v}(0,0)+\left(\frac{a}{b}\right)^2 \psi_{v,v}(0,0)\right] u^2 \notag \\
&-\frac{a}{b}\left[\varphi_u(0,0)-\frac{a}{b} \varphi_v(0,0)\right] u \varepsilon+\dots
\end{align}

Setting in the RHS $\varepsilon = 0$, we obtain the center manifold $w=h(u,0)$ of (\ref{eq:b1}).

{\bf Step 2. Singular Point Technique \cite{Wa02}}

On the other hand,
$$v=\frac{w-au}{b}=\frac{\lambda_1 u^2+\lambda_2 u \varepsilon+ \lambda_3 \varepsilon^2+\dots-au}{b}$$
Since, setting $\varepsilon=0$, we have that $v=\frac{w-au}{b}=\frac{\lambda_1 u^2-au}{b}$, equation (\ref{eq:b4}) becomes

\begin{equation}
\label{eq:wiggins}
\lambda_1 u^2+\psi\left(u,v\right)\Biggl|_{v=\frac{\lambda_1 u^2-au}{b}}=0
\end{equation}
which gives an identity up to $O(u^2)$, if we substitute $\lambda_1$ as above and if we operate a Taylor expansion around $(u,v)=(0,0)$.

The vector field reduced to the center manifold, from equation (\ref{eq:35}) of Theorem \ref{th:3}, is:
\begin{align}
\frac{d u}{ds}&= \varepsilon \varphi\left(u,\frac{h(u,\varepsilon)-au}{b}\right),  \notag \\
\frac{d \varepsilon}{ds}&=0
\end{align}
or, in terms of the original time scale,
\begin{align}
\label{eq:b8}
\dot u&= \varphi\left(u,\frac{h(u,\varepsilon)-au}{b}\right)   \notag \\
\dot\varepsilon&=0
\end{align}

Moreover:
$$\varphi\left(u,v\right)=\left(\frac{\partial\varphi}{\partial u}\ \underbrace{\frac{\partial u}{\partial u}}_{=1}+ \frac{\partial\varphi}{\partial v}\ \underbrace{\frac{\partial v}{\partial u}}_{=-a/b}\right)u+\left(\frac{\partial\varphi}{\partial u}\ \underbrace{\frac{\partial u}{\partial w}}_{=0}+ \frac{\partial\varphi}{\partial v}\ \underbrace{\frac{\partial v}{\partial w}}_{=1/b}\right)w+\dots$$
where $v=\frac{w-au}{b}$ and all the derivatives are calculated in $(0,0)$. Hence,
$$\varphi\left(u,v\right)=\left[\varphi_ u(0,0) -\frac{a}{b} \varphi_v(0,0) \right]u+w\frac{\varphi_v(0,0)}{b}+\dots$$
and, since $\lambda_2=-\frac{a}{b}\left[\varphi_u(0,0)-\frac{a}{b} \varphi_v(0,0)\right]$, we have:
$$\varphi\left(u,v\right)=-\frac{b}{a}\lambda_2 u+w\frac{\varphi_v(0,0)}{b}+\dots$$
for $v=\frac{w-au}{b}$. From (\ref{eq:b5}), the vector field reduced to the center manifold, in terms of the original time scale, near the origin, becomes:
\begin{align}
\label{eq:b9}
\dot u&= -\frac{b}{a}\lambda_2 u+\left(\lambda_1 u^2+\lambda_2 u \varepsilon\right)\frac{\varphi_v(0,0)}{b}+o\left(\varepsilon^2+u^2\right)   \notag \\
\dot\varepsilon&=0
\end{align}
for values of $\lambda_1$ and $\lambda_2$ as above.

Summarizing, we have obtained two relations:

\textbf{a) From the Center Manifold Theory}: considering (\ref{eq:b4}), for $w=h(u,\varepsilon)$, and setting $\varepsilon = 0$, we have:
\begin{equation}
\label{eq:center1}
b w+b \psi\left(u,\frac{w-au}{b}\right)=0
\end{equation}

\textbf{b) From Singular Perturbation Techniques}: by assumption B of section \ref{sec:singpert}, and since $g(u,w)=bw+a\varepsilon \varphi\left(u,\frac{w-au}{b}\right)+b \psi\left(u,\frac{w-au}{b}\right)$ in (\ref{eq:b3}), we have, putting $\varepsilon=0$:

\begin{equation}
\label{eq:center2}
g(u, \phi(u)) = 0  \quad \Rightarrow \quad b\phi(u)+b \psi\left(u,\frac{\phi(u)-au}{b}\right)=0
\end{equation}

Comparing (\ref{eq:center1}) and (\ref{eq:center2}), we observe a relation between $h(u, 0)$ and $\phi(u)$ but we cannot infer that $h(u,0)=\phi(u)$, due to the non-uniqueness of center manifold. However, in the above steps we have proven that
\begin{equation}
h(u,0)\sim\phi(u), \quad {\rm for} \quad u\rightarrow0
\end{equation}
Q.E.D.
\end{proof}

This theorem means that the center manifolds obtained by means of (\ref{eq:center1}) and (\ref{eq:center2}) are asymptotically equivalent. This allows us to interpret any QSSA, obtained imposing $\dot{C} = 0$, as a manifold which is asymptotically equivalent to the center manifold, as confirmed by equation (\ref{eq:b10}) for Heineken-Tsuchiya-Aris system, and by equations (\ref{eq:a30})-(\ref{eq:a31}) for the tQSSA.

This explains why, in order to achieve the center manifold of (\ref{eq:a9s}) and (\ref{eq:aaa12}), it is sufficient to consider - for $u\rightarrow0$ - the expression obtained equating to zero the second equation of these systems, (i.e. for $\varepsilon=0$). We recall that in many papers (see, for example, \cite{Hein67,dvorak,Khoo}, who refer to Tihonov's Theorem, and \cite{Ku11}, according to Singular Perturbation Theory), the center manifold is obtained equating to zero the right hand side of the equation of the form:
$$\varepsilon\frac{dy}{dt}=g(x,y) \ .$$

\section{Conclusions and Perspectives}

The quasi-steady state approximation has been a challenge for applied mathematicians, who had to explain the feasibility of an approximation which imposes to the complex $C$ both to be constant and to depend on $X$. Some Biochemistry texts (see, for example, \cite{lehninger,yeremin,price,hammes}) mislead the reader, interpreting the QSSA as a true equality, which brings to assert that the ratio $\displaystyle {E(t)\,S(t)}/{C(t)}$ is constant during all the quasi-steady state phase. This is obviously not true. In \cite{anything} the authors solve the apparent incongruence, determining the asymptotic value of $\displaystyle {E(t)\,S(t)}/{C(t)}$, showing that, for every choice of the kinetic parameters and of the initial conditions,

\begin{equation}
\label{KW} \frac{E_{as}\,X_{as}}{C_{as}}(t)
\rightarrow \left(\frac{k_{2}-\alpha}{\alpha}\right)E_T=:K_W \qquad ; \qquad \left( K_D < K_W < K_M \right)
\end{equation}
(where
$
\alpha = \frac{k_1}{2} (K_M+E_T)
\left[1-\sqrt{1-\frac{4k_{2}E_T}{k_{1}(K_M+E_T)^2}}\;\right]
$
), differently from what is wrongly stated.

Heineken et al. \cite{Hein67} and successively other authors \cite{SS,SchMai02,perturbation} interpreted sQSSA and tQSSA as leading order expansions of the solutions in terms of a suitable parameter, which has to be considered small.

This interpretation allows us to embed the QSSA theory in a framework which is related to Tihonov's Theorem \cite{Tik48,Tik50,Tik52,Hein67,SS,Wa02,dvorak,Khoo}, where the parameter multiplies the derivative of $C$ and the QSSA can be obtained as the singular point of the original system, setting $\varepsilon = 0$.

In this paper we have shown that, at least in the classical simple scheme (\ref{eq:a3}), the approximation obtained applying Tihonov's Theorem is asymptotically equivalent to the center manifold of the system, which means that reduced system and center manifold are two sides of the same coin.

Once again, the total QSSA has shown to be much more efficient and natural than the standard one, mainly thanks to the fact that the parameter used for the expansions in the total framework is always less than $\displaystyle \frac{1}{4}$.

In our actual researches we are applying the techniques shown in this paper to more complex enzyme reactions, as the fully competitive inhibition \cite{CAIM}, the phosphorylation-dephosphorylation cycle (or Goldbeter-Koshland switch \cite{goldkosh}), the linear double phosphorylation reaction, the double phosphorylation-dephosphorylation cycle \cite{wang2} and, more in general, futile cycles \cite{wang}.

These mechanisms were already studied in terms of tQSSA in previous papers \cite{us_bmb,fima,pbb2,us_febs,us_ultra,GDA_phospho,GDA_bistability,Giovanna}.

The techniques here shown will allow to read the tQSSA as the leading term of an asymptotic expansion in terms of a suitable perturbation parameter, in these more complex cases, too.

\section*{Acknowledgements}

The authors are deeply grateful to Prof. Enzo Orsingher, from Sapienza University (Rome, Italy) and Prof. Jan Andres, from Palacky University (Olomouc, Czech Republic) for their translations of the papers \cite{Tik48,Tik50,Tik52} and some precious clarifications concerning some passages of the papers.

%\begin{acknowledgements}
%If you'd like to thank anyone, place your comments here
%and remove the percent signs.
%\end{acknowledgements}

% BibTeX users please use one of
%\bibliographystyle{spbasic}      % basic style, author-year citations
%\bibliographystyle{spmpsci}      % mathematics and physical sciences
%\bibliographystyle{spphys}       % APS-like style for physics
%\bibliography{}   % name your BibTeX data base

% Non-BibTeX users please use

\end{document}